\documentclass[a4paper,12pt,reqno]{amsart}
\usepackage{amssymb}
\usepackage{amsmath}
\usepackage{enumitem}
\usepackage{mathrsfs}
\usepackage[all]{xy}
\setcounter{tocdepth}{1}
\usepackage{mathtools}
\usepackage{hyperref}
\usepackage{url}
\usepackage{bbm}
\usepackage{makecell}

\usepackage{longtable,graphics,multirow}
\usepackage{tikz}
\usepackage{tikz-cd}
\usepackage{pgf,tikz}
\usetikzlibrary{arrows}
\usepackage[all]{xy}

\usepackage[OT2,T1]{fontenc}
\DeclareSymbolFont{cyrletters}{OT2}{wncyr}{m}{n}
\setlength{\belowcaptionskip}{-0.3em}

\numberwithin{equation}{section} \numberwithin{figure}{section}

\DeclareMathOperator{\Gal}{Gal}

 \DeclareMathOperator{\rank}{rank}

\DeclareSymbolFont{cyrletters}{OT2}{wncyr}{m}{n}
\DeclareMathSymbol{\Sha}{\mathalpha}{cyrletters}{"58}
\DeclareMathSymbol{\Be}{\mathalpha}{cyrletters}{"42}

\newcommand{\OO}{\mathcal{O}}

\newcommand\PP{\mathbb{P}}

\newcommand{\DiagramOne}{
\begin{tikzpicture}[thick,scale=1.4, every node/.style={scale=1}]

\draw[fill=white] (0,0) circle (1.5	pt);

\draw (0,0) node[below]{\small{$\mathcal{O}$}};

\draw (0,0) node[above]{\small{$2$}};	

\draw (0.5,0) node{$\bullet$}; 

\draw (0.5,0) node[below]{\small{$\Theta_0$}};

\draw (0.5,0) node[above]{\small{$1$}};	

\draw (1,0) node{$\bullet$};

\draw (1,0) node[above]{\small{$1$}};	

\draw (1,0) node[below]{\small{$\Theta_1$}};

\draw (1.5,0) node{$\bullet$};

\draw (1.5,0) node[above]{\small{$1$}};

\draw (1.5,0) node[below]{\small{$\Theta_2$}};

\draw (2,0) node{$\bullet$};

\draw (2,0) node[above]{\small{$1$}};

\draw (2,0) node[below]{\small{$\Theta_3$}};

\draw (2.5,0) node{$\bullet$};

\draw (2.5,0) node[above]{\small{$1$}};

\draw (2.5,0) node[below]{\small{$\Theta_4$}};

\draw (3,0) node{$\bullet$};

\draw (3,0) node[below]{\small{$\Theta_5$}};

\draw (3,0.5) node{$\bullet$};

\draw (3,0.5) node[right]{\small{$\Theta_8$}};

\draw (3,0.5) node[left]{\small{$1$}};

\draw (3.5,0) node{$\bullet$};

\draw (3.5,0) node[below]{\small{$\Theta_6$}};

\draw (3.5,0) node[right]{\small{$1$}};

\draw (0.05,0) -- (0.5,0);

\draw (0.5,0) -- (1,0);

\draw (1,0) -- (1.5,0);

\draw (1.5,0) -- (2,0);

\draw (2,0) -- (2.5,0);

\draw (2.5,0) -- (3,0);

\draw (3,0) -- (3.5,0);

\draw (3,0) -- (3,0.5);
\end{tikzpicture}
}

\newcommand{\DiagramTwo}{
\begin{tikzpicture}[thick,scale=1.4, every node/.style={scale=1}]

\draw[fill=white] (0,0) circle (1.5	pt);

\draw (0,0) node[below]{\small{$\mathcal{O}$}};

\draw (0,0) node[above]{\small{$2$}};	

\draw (0.5,0) node{$\bullet$}; 

\draw (0.5,0) node[below]{\small{$\Theta_0$}};

\draw (0.5,0) node[above]{\small{$1$}};	

\draw (1,0) node{$\bullet$};

\draw (1,0) node[above]{\small{$1$}};	

\draw (1,0) node[below]{\small{$\Theta_1$}};

\draw (1.5,0) node{$\bullet$};

\draw (1.5,0) node[above]{\small{$1$}};

\draw (1.5,0) node[below]{\small{$\Theta_2$}};

\draw (2,0) node{$\bullet$};

\draw (1.85,0) node[above]{\small{$1$}};

\draw (2,0) node[below]{\small{$\Theta_3$}};

\draw (2,0.5) node{$\bullet$};

\draw (2,0.5) node[right]{\small{$\Theta_7$}};

\draw (2,0.5) node[left]{\small{$1$}};

\draw (2.5,0) node{$\bullet$};

\draw (2.5,0) node[below]{\small{$\Theta_4$}};

\draw (2.5,0) node[right]{\small{$1$}};

\draw (0.05,0) -- (0.5,0);

\draw (0.5,0) -- (1,0);

\draw (1,0) -- (1.5,0);

\draw (1.5,0) -- (2,0);

\draw (2,0) -- (2.5,0);

\draw (2,0) -- (2,0.5);

\end{tikzpicture}
}

\newcommand{\DiagramThree}{
\begin{tikzpicture}[thick,scale=1.4, every node/.style={scale=1}]

\draw[fill=white] (0,0) circle (1.5	pt);

\draw (0,0) node[below]{\small{$P_1$}};

\draw (0,0) node[above]{\small{$1$}};	

\draw (0.5,0) node{$\bullet$}; 

\draw (0.5,0) node[below]{\small{$\Theta_0$}};

\draw (0.5,0) node[above]{\small{$1$}};	

\draw (1,0) node{$\bullet$};

\draw (1,0) node[above]{\small{$1$}};	

\draw (1,0) node[below]{\small{$\Theta_1$}};

\draw (1.5,0) node{$\bullet$};

\draw (1.5,0) node[above]{\small{$1$}};

\draw (1.5,0) node[below]{\small{$\Theta_2$}};

\draw (2,0) node{$\bullet$};

\draw (2,0) node[below]{\small{$\Theta_3$}};

\draw (2,0) node[above]{\small{$1$}};

\draw (2.5,0) node{$\bullet$};

\draw (2.5,0) node[below]{\small{$\Theta_4$}};

\draw (2.5,0) node[above]{\small{$1$}};

\draw [fill=white] (3,0) circle (1.5pt);

\draw (3,0) node[below]{\small{$P_2$}};

\draw (3,0) node[above]{\small{$1$}};

\draw (0.05,0) -- (0.5,0);

\draw (0.5,0) -- (1,0);

\draw (1,0) -- (1.5,0);

\draw (1.5,0) -- (2,0);

\draw (2,0) -- (2.5,0);

\draw (2.5,0) -- (2.95,0);

\end{tikzpicture}
}

\newcommand{\DiagramFour}{
\begin{tikzpicture}[thick,scale=1.4, every node/.style={scale=1}]

\draw[fill=white] (0,0) circle (1.5	pt);

\draw (0,0) node[below]{\small{$\mathcal{O}$}};

\draw (0,0) node[above]{\small{$2$}};	

\draw (0.5,0) node{$\bullet$}; 

\draw (0.5,0) node[below]{\small{$\Theta_0$}};

\draw (0.5,0) node[above]{\small{$1$}};	

\draw (1,0) node{$\bullet$};

\draw (1,0) node[above]{\small{$1$}};	

\draw (1,0) node[below]{\small{$\Theta_1$}};

\draw (1.5,0.25) node{$\bullet$};

\draw (1.5,0.25) node[above]{\small{$1$}};

\draw (1.5,0.25) node[right]{\small{$\Theta_2$}};

\draw (1.5,-0.25) node{$\bullet$};

\draw (1.5,-0.25) node[right]{\small{$\Theta_3$}};

\draw (1.5,-0.25) node[above]{\small{$1$}};

\draw (0.05,0) -- (0.5,0);

\draw (0.5,0) -- (1,0);

\draw (1,0) -- (1.5,0.25);

\draw (1,0) -- (1.5,-0.25);

\end{tikzpicture}
}

\newcommand{\DiagramFive}{
\begin{tikzpicture}[thick,scale=1.4, every node/.style={scale=1}]

\draw[fill=white] (0,0) circle (1.5	pt);

\draw (0,0) node[below]{\small{$\mathcal{O}$}};

\draw (0,0) node[above]{\small{$2$}};	

\draw (0.5,0) node{$\bullet$}; 

\draw (0.5,0) node[below]{\small{$\Theta_0$}};

\draw (0.5,0) node[above]{\small{$1$}};	

\draw (1,0) node{$\bullet$};

\draw (1,0) node[above]{\small{$1$}};	

\draw (1,0) node[below]{\small{$\Theta_1$}};

\draw (1.5,0.25) node{$\bullet$};

\draw (1.5,0.25) node[above]{\small{$1$}};

\draw (1.5,0.25) node[right]{\small{$\Theta_3$}};

\draw (1.5,-0.25) node{$\bullet$};

\draw (1.5,-0.25) node[right]{\small{$\Theta_2$}};

\draw (1.5,-0.25) node[above]{\small{$1$}};

\draw (0.05,0) -- (0.5,0);

\draw (0.5,0) -- (1,0);

\draw (1,0) -- (1.5,0.25);

\draw (1,0) -- (1.5,-0.25);

\end{tikzpicture}
}

\newcommand{\DiagramSix}{
\begin{tikzpicture}[thick,scale=1.4, every node/.style={scale=1}]

\draw[fill=white] (0,0) circle (1.5	pt);

\draw (0.5,0) node{$\bullet$}; 

\draw (1,0) node{$\bullet$};

\draw (1.5,0.25) node{$\bullet$};

\draw (1.5,-0.25) node{$\bullet$};

\draw (0.05,0) -- (0.5,0);

\draw (0.5,0) -- (1,0);

\draw (1,0) -- (1.5,0.25);

\draw (1,0) -- (1.5,-0.25);

\end{tikzpicture}
}

\newcommand{\DiagramSeven}{
\begin{tikzpicture}[thick,scale=1.4, every node/.style={scale=1}]

\draw[fill=white] (0,0) circle (1.5	pt);

\draw (0,0) node[above]{\small{$1$}};

\draw (0.5,0) node{$\bullet$}; 

\draw (0.5,0) node[above]{\small{$1$}};

\draw (1,0) node{$\bullet$};

\draw (1,0) node[above]{\small{$1$}};

\draw (1.5,0) node{$\bullet$};

\draw (1.5,0) node[above]{\small{$1$}};

\draw[fill=white] (2,0) circle (1.5	pt);

\draw (2,0) node[above]{\small{$1$}};

\draw (0.05,0) -- (0.5,0);

\draw (0.5,0) -- (1,0);

\draw (1,0) -- (1.5,0);

\draw (1.5,0) -- (1.95,0);

\end{tikzpicture}
}

\newcommand{\DiagramEight}{
\begin{tikzpicture}[thick,scale=1.4, every node/.style={scale=1}]

\draw (0,0) node{$\bullet$};

\draw (0,0) node[below]{\small{$\Theta_0^1$}};

\draw[fill=white] (0.5,0) circle (1.5pt); 

\draw (0.5,0) node[above]{\small{$2$}};

\draw (0.5,0) node[below]{\small{$\mathcal{O}$}};

\draw (1,0) node{$\bullet$};

\draw (1,0) node[below]{\small{$\Theta_0^2$}};

\draw (0.05,0) -- (0.45,0);

\draw (0.55,0) -- (1,0);

\end{tikzpicture}
}

\newcommand{\DiagramNine}{
\begin{tikzpicture}[thick,scale=1.4, every node/.style={scale=1}]

\draw[fill=white] (0,0) circle (1.5pt);

\draw (0,0) node[below]{\small{$P$}};

\draw (0.5,0) node{$\bullet$};

\draw (1,0) node{$\bullet$};

\draw (1.5,0) node{$\bullet$};

\draw[fill=white] (2,0) circle (1.5pt);

\draw (2,0) node[below]{$P'$};

\draw (0.05,0) -- (0.5,0);

\draw (0.5,0) -- (1,0);

\draw (1,0) -- (1.5,0);

\draw (1.5,0) -- (1.95,0);

\end{tikzpicture}
}

\newcommand{\DiagramTen}{
\begin{tikzpicture}[thick,scale=1.4, every node/.style={scale=1}]

\draw (0,0) node{$\bullet$};

\draw (0,0) node[below]{\small{$\Theta_0^1$}};

\draw[fill=white] (0.5,0) circle (1.5pt); 

\draw (0.5,0) node[above]{\small{$2$}};

\draw (0.5,0) node[below]{\small{$\mathcal{O}$}};

\draw (1,0) node{$\bullet$};

\draw (1,0) node[below]{\small{$\Theta_0^2$}};

\draw (0,0) -- (0.45,0);

\draw (0.55,0) -- (1,0);

\end{tikzpicture}
}

\newcommand{\DiagramEleven}{
\begin{tikzpicture}[thick,scale=1.4, every node/.style={scale=1}]

\draw[fill=white] (0,0) circle (1.5pt);

\draw (0,0) node[below]{\small{$P$}};

\draw (0.5,0) node{$\bullet$};

\draw (1,0) node{$\bullet$};

\draw (1.5,0) node{$\bullet$};

\draw (2,0) node{$\bullet$};

\draw (2.5,0) node{$\bullet$};

\draw[fill=white] (3,0) circle (1.5pt);

\draw (3,0) node[below]{$P'$};

\draw (0.05,0) -- (0.5,0);

\draw (0.5,0) -- (1,0);

\draw (1,0) -- (1.5,0);

\draw (1.5,0) -- (2,0);

\draw (2,0) -- (2.5,0);

\draw (2.5,0) -- (2.95,0);

\end{tikzpicture}
}

\newcommand{\EEight}{
\begin{tikzpicture}[thick,scale=1.4, every node/.style={scale=1}]

\draw (0,0) node{$\bullet$};

\draw (0,0) node[below]{\small{$2$}};

\draw (0,0) node[above]{\small{$\Theta_7$}};	

\draw (0.5,0) node{$\bullet$};

\draw (0.5,0) node[below]{\small{$4$}};

\draw (0.5,0) node[above]{\small{$\Theta_6$}};	

\draw (1,0) node{$\bullet$};

\draw (1,0) node[below left]{\small{$6$}};

\draw (1,0) node[above]{\small{$\Theta_5$}};	

\draw (1,-0.5) node{$\bullet$};

\draw (1,-0.5) node[left]{\small{$3$}};

\draw (1,-0.5) node[right]{\small{$\Theta_8$}};	

\draw (1.5,0) node{$\bullet$};

\draw (1.5,0) node[below]{\small{$5$}};

\draw (1.5,0) node[above]{\small{$\Theta_4$}};	

\draw (2,0) node{$\bullet$};

\draw (2,0) node[below]{\small{$4$}};

\draw (2,0) node[above]{\small{$\Theta_3$}};	

\draw (2.5,0) node{$\bullet$};

\draw (2.5,0) node[below]{\small{$3$}};

\draw (2.5,0) node[above]{\small{$\Theta_2$}};	

\draw (3,0) node{$\bullet$};

\draw (3,0) node[below]{\small{$2$}};

\draw (3,0) node[above]{\small{$\Theta_1$}};	

\draw (3.5,0) node{$\bullet$};

\draw (3.5,0) node[below]{\small{$1$}};

\draw (3.5,0) node[above]{\small{$\Theta_0$}};	

\draw (0,0) -- (0.5,0);

\draw (0.5,0) -- (1,0);

\draw (1,0) -- (1.5,0);

\draw (1,0) -- (1,-0.5);

\draw (1.5,0) -- (2,0);

\draw (2,0) -- (2.5,0);

\draw (2.5,0) -- (3,0);

\draw (3,0) -- (3.5,0);
\end{tikzpicture}
}

\newcommand{\ESeven}{
\begin{tikzpicture}[thick,scale=1.4, every node/.style={scale=1}]
\draw (0,0) node{$\bullet$};

\draw (0,0) node[below]{\small{$1$}};

\draw (0,0) node[above]{\small{$\Theta_0$}};	

\draw (0.5,0) node{$\bullet$};

\draw (0.5,0) node[below]{\small{$2$}};

\draw (0.5,0) node[above]{\small{$\Theta_1$}};	

\draw (1,0) node{$\bullet$};

\draw (1,0) node[below]{\small{$3$}};

\draw (1,0) node[above]{\small{$\Theta_2$}};	

\draw (1.5,0) node{$\bullet$};

\draw (1.5,0) node[below left]{\small{$4$}};

\draw (1.5,0) node[above]{\small{$\Theta_3$}};	

\draw (1.5,-0.5) node{$\bullet$};

\draw (1.5,-0.5) node[left]{\small{$2$}};

\draw (1.5,-0.5) node[right]{\small{$\Theta_7$}};	

\draw (2,0) node{$\bullet$};

\draw (2,0) node[below]{\small{$3$}};

\draw (2,0) node[above]{\small{$\Theta_4$}};	

\draw (2.5,0) node{$\bullet$};

\draw (2.5,0) node[below]{\small{$2$}};

\draw (2.5,0) node[above]{\small{$\Theta_5$}};	

\draw (3,0) node{$\bullet$};

\draw (3,0) node[below]{\small{$1$}};

\draw (3,0) node[above]{\small{$\Theta_6$}};	

\draw (0,0) -- (0.5,0);

\draw (0.5,0) -- (1,0);

\draw (1,0) -- (1.5,0);

\draw (1.5,0) -- (2,0);

\draw (1.5,0) -- (1.5,-0.5);

\draw (2,0) -- (2.5,0);

\draw (2.5,0) -- (3,0);
\end{tikzpicture}
}

\newcommand{\ESix}{
\begin{tikzpicture}[thick,scale=1.4, every node/.style={scale=1}]
\draw (0,0) node{$\bullet$};

\draw (0,0) node[below]{\small{$1$}};

\draw (0,0) node[above]{\small{$\Theta_0$}};	

\draw (0.5,0) node{$\bullet$};

\draw (0.5,0) node[below]{\small{$2$}};

\draw (0.5,0) node[above]{\small{$\Theta_1$}};	

\draw (1,0) node{$\bullet$};

\draw (1,0) node[below left]{\small{$3$}};

\draw (1,0) node[above]{\small{$\Theta_2$}};	

\draw (1,-0.5) node{$\bullet$};

\draw (1,-0.5) node[left]{\small{$2$}};

\draw (1,-0.5) node[right]{\small{$\Theta_5$}};	

\draw (1,-1) node{$\bullet$};

\draw (1,-1) node[left]{\small{$1$}};

\draw (1,-1) node[right]{\small{$\Theta_6$}};

\draw (1.5,0) node{$\bullet$};

\draw (1.5,0) node[below]{\small{$2$}};

\draw (1.5,0) node[above]{\small{$\Theta_3$}};	

\draw (2,0) node{$\bullet$};

\draw (2,0) node[below]{\small{$1$}};

\draw (2,0) node[above]{\small{$\Theta_4$}};	

\draw (0,0) -- (0.5,0);

\draw (0.5,0) -- (1,0);

\draw (1,0) -- (1,-0.5);

\draw (1,-0.5) -- (1,-1);

\draw (1,0) -- (1.5,0);

\draw (1.5,0) -- (2,0);
\end{tikzpicture}
}

\newcommand{\Dn}{
\begin{tikzpicture}[thick,scale=1.4, every node/.style={scale=1}]
\draw (0,0.5) node{$\bullet$};

\draw (0,0.5) node[right]{\small{$1$}};

\draw (0,0.5) node[left]{\small{$\Theta_0$}};	

\draw (0,-0.5) node{$\bullet$};

\draw (0,-0.5) node[right]{\small{$1$}};

\draw (0,-0.5) node[left]{\small{$\Theta_1$}};	

\draw (0.5,0) node{$\bullet$};

\draw (0.5,0) node[above]{\small{$2$}};

\draw (0.5,0) node[left]{\small{$\Theta_4$}};	

\draw (1,0) node{$\bullet$};

\draw (1,0) node[above]{\small{$2$}};	

\draw (1,0) node[below]{\small{$\Theta_5$}};	

\draw (2.5,0) node{$\bullet$};

\draw (2.5,0) node[above]{\small{$2$}};	

\draw (2.5,0) node[below]{\small{$\Theta_{n+3}$}};	

\draw (3,0) node{$\bullet$};

\draw (3,0) node[above]{\small{$2$}};	

\draw (3,0) node[right]{\small{$\Theta_{n+4}$}};	

\draw (3.5,0.5) node{$\bullet$};

\draw (3.5,0.5) node[left]{\small{$1$}};	

\draw (3.5,0.5) node[right]{\small{$\Theta_2$}};	

\draw (3.5,-0.5) node{$\bullet$};

\draw (3.5,-0.5) node[left]{\small{$1$}};	

\draw (3.5,-0.5) node[right]{\small{$\Theta_3$}};	

\draw (0,0.5) -- (0.5,0);

\draw (0,-0.5) -- (0.5,0);

\draw (0.5,0) -- (1,0);

\draw (1,0) -- (1.45,0);

\draw (1.55,0) -- (1.95,0);

\draw (2.05,0) -- (2.5,0);

\draw (2.5,0) -- (3,0);

\draw (3,0) -- (3.5,0.5);

\draw (3,0) -- (3.5,-0.5);
\end{tikzpicture}
}

\newtheorem{lemma}{Lemma}

\newtheorem{theorem}[lemma]{Theorem}
\newtheorem{proposition}[lemma]{Proposition}
\newtheorem{corollary}[lemma]{Corollary}

\theoremstyle{definition}
\newtheorem{example}[lemma]{Example}

\newtheorem{definition}[lemma]{Definition}
\newtheorem{remark}[lemma]{Remark}

\numberwithin{lemma}{section}

\usepackage{color}

\begin{document}

\title{Large rank jumps on elliptic surfaces and the Hilbert property}

\author{Renato Dias Costa}
\address{Instituto de Matem\'atica, Univ. Federal do Rio de Janeiro, Rio de Janeiro, Brazil}

\author{Cec\'{\i}lia Salgado}
\address{Bernoulli Institute, Rijksuniversiteit Groningen}
\email{csalgado@rug.nl}
\urladdr{https://www.math.rug.nl/algebra/Main/salgado}
\subjclass[2010]
{14G05 (primary), 
14J27, 
11G05 
(secondary)}

\begin{abstract}
Given a rational elliptic surface over a number field, we study the collection of fibers whose Mordell--Weil rank is greater than the generic rank. We give conditions on the singular fibers to assure that the collection of fibers for which the rank jumps of at least 3 is not thin.\end{abstract}

\maketitle

\thispagestyle{empty}

\tableofcontents

\section{Introduction} \label{sec:intro}

Let $E_t$ with $t\in \mathbb{P}^1_k$ be a family of elliptic curves over a number field $k$. The variation of the Mordell--Weil rank as $t$ runs over $\mathbb{P}^1_k$ is a prominent problem in Diophantine Geometry. It has been notably applied in the construction of elliptic curves with large rank (\cite{Mestre, Fermigier, Nagao17, Nagao20, Nagao21, Elkies}).  A key tool in its study is a specialization theorem proved by N\'eron (see \cite[Thm 6.]{Ne52}) which says that outside a thin subset of rational points of the base, the rank of the fibers is at least the generic rank. Silverman proved around 30 years later that the bound holds outside a set of bounded height. In particular, the rank of $E_t$ is bounded from below by the rank of the generic fiber, for all but finitely many $t\in \mathbb{P}^1$.  A natural question is then whether this bound can be made strict, in which case we say that the \emph{rank jumps}, and if so, what is the nature of the collection of fibers for which this holds. For instance, under which conditions is it an infinite set?

Techniques to deal with this type of question include the study of the behavior of the root numbers in families, pioneered by Rohrlich in \cite{Rohrlich}; height theory estimations as in \cite{Billard} and geometric techniques, more precisely base change, introduced by N\'eron in \cite{Ne52}. 

Conclusions on the Mordell--Weil rank obtained via the study of root numbers are conditional to the Birch and Swinnerton--Dyer conjecture and, even assuming BSD, hold only over $\mathbb{Q}$ for the functional equation is only conjectural over more general number fields. Results that stem from height theory are also often restricted to the field of rational numbers as they build up on properties of the height machinery proven only over the rationals. On the other hand, geometric techniques usually allow for more flexibility on the base field.

The goal of this note is to push the geometric techniques \`a la N\'eron further and combine them with recently developed methods to obtain infinitely many specializations for which the rank jumps by at least 3. We can moreover achieve this for a non-thin collection of fibers. To our knowledge, this is the largest rank jump observed in this level of generality. We explain in what follows our results and setup in detail.

\subsection{Statement of results and relation to the literature}

Let $\pi:X \to \PP^1$ be an elliptic surface defined over a number field $k$, i.e.~a smooth projective surface endowed with a genus 1 fibration that admits a section. If $X$ is geometrically rational and relatively minimal, i.e., it is birational to $\mathbb{P}^2$ over an algebraic closure of $k$ and such that no fiber contains $(-1)$-curves as components, then $X$ is isomorphic to the blow up of $\mathbb{P}^2$ in the 9 base points of  a pencil of plane cubics. In this case, the Shioda--Tate formula \cite[Equation (7.1)]{MWLattices} implies that the generic rank, i.e., the Mordell--Weil rank of the generic fiber of $\pi$, is at most 8. In \cite{Ne56}, N\'eron takes $X$ of maximal generic rank and sketches a construction of an elliptic surface $Y$ that admits a degree 6 cover to  $X$ and has generic rank at least 11. He does so by exhibiting 3 rational curves (multisections) $L_1, L_2$ and $L_3$ in $X$ such that the base change of $\pi: X\rightarrow \mathbb{P}^1$ by $L_1\times_{\mathbb{P}^1}L_2 \times_{\mathbb{P}^1} L_3$ admits 3 new and independent sections. A key feature of this construction is that thanks to a shared branch point of the morphisms to the base $L_i\rightarrow \mathbb{P}^1$ the curve  $L_1\times_{\mathbb{P}^1}L_2 \times_{\mathbb{P}^1} L_3$ has genus 1. He shows moreover that its normalization is an elliptic curve with positive Mordell--Weil rank. This puts one in position to apply N\'eron--Silverman's specialization theorem and obtain infinitely many fibers of the original $\pi$ of Mordell--Weil rank 11. 

N\'eron's construction was adapted to other rational elliptic surfaces and higher dimensional varieties by several authors (\cite{salgado1, salgado3, HindrySal, CT19, Shioda}). In particular, the second named author generalized this technique to rational elliptic surfaces endowed with a conic bundle structure over $k$ in \cite{salgado1}. One key difference in the latter is that she does not construct the curves $L_i$ explicitly. Instead, she  shows a general result that states that on an elliptic surface (not necessarily rational) all but finitely many curves on a positive dimensional linear system yield an independent section after base change of the fibration by it. A feature of that construction is that one can no longer assume that the curves that play the role of $L_i$ in N\'eron's construction admit morphisms to the base with a common branch point. Hence $L_1\times_{\mathbb{P}^1}L_2$ might already have genus 1,  limiting the number of base changes to 2. As a corollary there are infinitely many fibers of the original elliptic fibration for which the Mordell--Weil rank jumps by at least 2. 

Given N\'eron's specialization theorem, it is natural to wonder if the infinite subset of fibers that witness a rank jump is thin or not. In N\'eron's construction of rank jump 3, the set is thin. Indeed, the curves $L_i$ are rigid and hence the rank jump happens for fibers above points in a so-called thin set of type 2. On the other hand, Salgado's constructions in \cite{salgado1} implies rank jumps for fibers in an infinite union of thin sets of type 2. Recently, in collaboration with Loughran, Salgado showed (\cite{LoSal}) that the latter sets are not thin and extended the results for a larger class of rational elliptic surfaces. For the same reasons as in \cite{salgado1} the construction in \cite{LoSal} stops after 2 base changes. 

The aim of this note is to study rational elliptic surfaces that admit conic bundles with the property that the restrictions of the elliptic fibration to each conic in the bundle share a common branch point. This allows us to perform 3 base changes \`a la N\'eron and push further the techniques presented in \cite{LoSal} to obtain rank jump of at least 3 in a non-thin collection of fibers. Our main result concerns rational elliptic surfaces with non-reduced fibers. Indeed, the presence of a non-reduced fiber in the elliptic fibration is a necessary condition for the existence of a common ramification among the restrictons of the elliptic fibration to the conics. We call them \emph{RNRF-conic bundles} (see Definition \ref{def:RNRF}). Our main result is the following (Theorem \ref{thm:3times}):

\begin{theorem}
	Let $\pi: X \to \PP^1$ be a geometrically rational elliptic surface over a number field $k$ with generic rank $r$. Assume that $\pi$ admits a \emph{RNRF}-conic bundle. Then the set
	$\{ t \in \PP^1(k) : \rank X_t(k) \geq r + 3\}$
	is not thin.
\end{theorem}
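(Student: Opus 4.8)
The plan is to promote N\'eron's three-fold base-change construction to a statement about the Hilbert property, invoking the RNRF hypothesis exactly at the point where a common ramification is needed, and then to transport non-thinness from an auxiliary variety down to $\PP^1$ via specialisation. Write $q\colon X\to\PP^1$ for the RNRF-conic bundle and $\{C_u\}_{u\in\PP^1}$ for its conics; each smooth $C_u$ is a bisection of $\pi$, and by Definition~\ref{def:RNRF} the degree-$2$ map $\pi|_{C_u}\colon C_u\to\PP^1$ is ramified over a point $t_0\in\PP^1(k)$ \emph{independent of $u$}, namely the point supporting the non-reduced fibre, together with one further point $s(u)$. I would then consider the triple fibre power $V:=X\times_{\PP^1}X\times_{\PP^1}X$ formed over the base of $\pi$. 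Projecting each factor by $q$ gives a morphism $(q,q,q)\colon V\to(\PP^1)^3$ whose fibre over $(u_1,u_2,u_3)$ is the fibre product $C_{u_1}\times_{\PP^1}C_{u_2}\times_{\PP^1}C_{u_3}$; this is a $(\ZZ/2)^3$-cover of $\PP^1$ branched over $\{t_0,s(u_1),s(u_2),s(u_3)\}$, and a Riemann--Hurwitz count in which the three quadratic subcovers share the branch point $t_0$ gives $2g-2=8\cdot(-2)+16=0$, so its normalisation has genus $1$. (Without the shared $t_0$ the branch locus would have four points and the genus would be $5$; this is the single place the RNRF hypothesis enters, and it is precisely what allows three base changes where \cite{salgado1,LoSal} stopped at two.) The common ramification points over $t_0$ assemble into a section of $V\to(\PP^1)^3$, so it is a genuine elliptic fibration; I would check that its generic Mordell--Weil rank is positive, for instance using the differences of the (rational) points of the generic fibre lying over $t_0$, or of the eight points over $t=\infty$.

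The crux is then to deduce that $V$ has the Hilbert property. Since $(\PP^1)^3$ is rational it has the Hilbert property, so I would invoke the theorem of Demeio (see also Corvaja--Zannier) that an elliptic fibration over a base with the Hilbert property whose generic fibre has positive Mordell--Weil rank again has the Hilbert property, applied to a smooth proper model of $V\to(\PP^1)^3$; the remaining hypotheses (non-isotriviality, which holds since the $j$-invariant of $C_{u_1}\times_{\PP^1}C_{u_2}\times_{\PP^1}C_{u_3}$ varies with the $u_i$, and a section of infinite order) are what the RNRF structure must be shown to guarantee. This step --- verifying that the fibration built from an RNRF-conic bundle meets the precise hypotheses of the Hilbert-property criterion --- is the main obstacle; the Riemann--Hurwitz computation above is the clean part, but turning ``$V$ is dominated by a nice variety'' into ``$V$ itself has the Hilbert property'' is where the real work lies. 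The conclusion of this step is that $V(k)$ is not thin.

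Next I would produce the rank jump on the fibres of $\pi$. On $\mathcal E:=X\times_{\PP^1}V\to V$ consider the three tautological sections $P_i\colon(c_1,c_2,c_3)\mapsto(c_i;c_1,c_2,c_3)$; these are exactly the new sections furnished by the three N\'eron base changes along the $C_{u_i}$. A short specialisation argument --- restrict two of the three factors to the zero section $O$ of $\pi$, reducing to the tautological point of $X\times_{\PP^1}X\to X$, which is non-torsion because $[n]$ is non-constant on fibres --- shows that $P_1,P_2,P_3$, together with the pullback of the generic Mordell--Weil group of $\pi$, span a subgroup of rank $\ge r+3$ over $k(V)$, i.e.\ the elliptic fibration $\mathcal E\to V$ has generic rank $\ge r+3$. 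Making this independence sufficiently uniform to feed into a specialisation theorem over the positive-dimensional base $V$ is a secondary technical point.

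Finally I would descend to $\PP^1$. Composing the first projection with $\pi$ gives a dominant morphism $\Phi\colon V\to\PP^1$ admitting the section $t\mapsto(O(t),O(t),O(t))$; by the standard stability properties of thin sets the $\Phi$-preimage of any thin set is then thin, so $\Phi$ maps non-thin sets to non-thin sets. By the N\'eron--Silverman specialisation theorem applied to $\mathcal E\to V$ there is a thin set $\Sigma\subset V(k)$ such that $\rank\mathcal E_v(k)\ge r+3$ for every $v\in V(k)\setminus\Sigma$; since $\mathcal E_v=X_{\Phi(v)}$ and $V(k)\setminus\Sigma$ is not thin (a non-thin set minus a thin set is non-thin), its image $\Phi\bigl(V(k)\setminus\Sigma\bigr)$ is not thin and is contained in $\{t\in\PP^1(k):\rank X_t(k)\ge r+3\}$. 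This gives the theorem.
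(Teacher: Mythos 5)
Your architecture (one global fourfold $V=X\times_{\PP^1}X\times_{\PP^1}X$, Hilbert property for $V$ via a fibration theorem, then descent along $\Phi$) is genuinely different from the paper's, but it has a real gap exactly at its crux. The lifting statement you invoke --- ``an elliptic fibration over a base with the Hilbert property whose generic fibre has positive Mordell--Weil rank again has the Hilbert property'' --- is false as stated: for $E\times\PP^1\to\PP^1$ with $\rank E(k)>0$ the generic rank is positive and the base has the HP, yet the total space has nontrivial finite \'etale covers, so its rational points are thin by Chevalley--Weil. The correct theorems of Corvaja--Zannier/Demeio carry additional hypotheses (smooth projective model, non-isotriviality/simple-connectedness type conditions, a section of infinite order, and restrictions on the base), and verifying them for your $V\to(\PP^1)^3$ --- a singular fourfold over a three-dimensional base, for which one must first produce a suitable smooth proper elliptic model --- is precisely the content you defer. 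Worse, the one hypothesis you sketch an argument for, positive generic Mordell--Weil rank of $V\to(\PP^1)^3$, is not established by your suggestion: the four points of the normalised fibre over $t_0$ form a torsor under the deck group of the degree-$4$ cover of $C_{u_1}$, so differences such as $\iota_2\iota_3p-p$ are translations by $2$-torsion, and nothing you say excludes that all the differences you propose are torsion. In the paper this positivity is exactly the delicate point: it is obtained in Proposition \ref{prop:proof} only for carefully chosen members $D_1,D_2,D_3$ (not for the generic triple), using the Zariski-density results of \cite{LoSal} and Theorem \ref{thm:pencil}; an analogue over the function field of $(\PP^1)^3$ would need a genuine argument.

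The remaining skeleton is essentially sound: the genus-$1$ computation is the paper's Lemma \ref{lem:genus1}; the rank $\geq r+3$ of $X\times_{\PP^1}V\to V$ via specialising the tautological sections is correct; and the final descent works, though the right justification is that the generic fibre of $\Phi$ is geometrically integral (so preimages of thin sets are thin), not merely that $\Phi$ has a section. By contrast, the paper avoids any HP-lifting machinery: given finitely many covers $Y_i\to\PP^1$, it constructs a single genus-$1$ multisection $C=D_1\times_B D_2\times_B D_3$ of positive rank whose branch locus meets that of each $Y_i$ in at most the one point below the non-reduced fibre, so each $C\times_B Y_i$ has genus $\geq 2$ and Faltings' theorem plus specialisation (Theorems \ref{thm:Silverman} and \ref{thm:pencil}) yield infinitely many fibres of rank $\geq r+3$ outside $\bigcup_i\psi_i(Y_i(k))$. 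Until you supply a correct lifting statement applicable over the three-dimensional base and a proof of positive generic rank for $V\to(\PP^1)^3$, your route does not amount to a proof.
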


This note is organized as follows. Section \ref{sec:pre} is dedicated to establishing the necessary background. Section \ref{sec:RNRF} presents the constructions of the key tools of this paper, namely the conic bundles that are ramified at the (unique) non-reduced fiber of the elliptic fibrations. Section \ref{sec:rankjump} contains the proof of the main theorem. The reader will find a series of examples in Section \ref{Sec:examples}.

\subsection*{Notation}
We keep the notation used in \cite{LoSal}. We recall it here for the sake of completion. 

 By a \emph{curve} on a smooth projective surface we mean an effective divisor, viewed as a closed subscheme. Let $C$ be a projective curve (not necessarily integral). We define its arithmetic genus to be $p_a(C) = 1 - \chi(C,\OO_C)$. If $C$ is geometrically integral, we define its (geometric) genus  $g(C)$ of $C$ to be the genus of the normalization of $C$.

Let $f:X \to Y$ be a morphism of varieties and $Z_1,Z_2 \subset X$ closed subvarieties. We denote by $Z_1 \times_f Z_2$ the fiber product of $Z_1$ and $Z_2$ which respect the morphisms $f_{|Z_i} :Z_i \to Y$.

\subsection*{Acknowledgements}
Renato Dias Costa was supported by FAPERJ (grant E-26/201.182/2020). Cec\'ilia Salgado was partially supported by FAPERJ (grant E-26/202.786/2019).

\section{Preliminaries} \label{sec:pre}
This section introduces the main definitions and some auxiliary results used in the rest of the paper.
Throughout this paper, $k$ denotes a number field. 

\subsection{Elliptic surfaces}

\begin{definition}  \label{def:elliptic_surface}
An \emph{elliptic surface} over $k$ is a smooth projective surface $X$ together with a morphism $\pi: X \to B$ to a smooth projective curve $B$ which admits a section and whose generic fiber is a smooth curve of genus $1$. We call $X$ a \emph{geometrically rational elliptic surface} if, once we fix an algebraic closure $\bar{k}$ of $k$, then $\bar{X}=X \times_k \bar{k}$ is birational to $\mathbb{P}^2$. In that case, one must clearly have $B \simeq \mathbb{P}^1$. We call $X$ \emph{$k$-rational} if such a birational map exists already over $k$. We use the letter $r$ to denote the Mordell--Weil rank of the generic fiber of $\pi$, and $r_t$ to denote the Mordell--Weil rank of the special fiber above $t\in B(k)$, i.e., $\pi^{-1}(t)(k)$.

\end{definition}

We fix a choice of section to act as the identity element for each smooth fiber. 

We say that $\pi$ is \emph{relatively minimal} if the fibers of $\pi$ do not contain $(-1)$-curves as components. This is a geometric assumption but thanks to \cite[Cor. 9.3.24, Prop. 9.3.28]{Liu} we know that this is equivalent to the apparently weaker assumption that the fibration is minimal over $k$.

If $X$ is a geometrically rational elliptic surface then $X$ admits a blow-down to a surface with a relatively minimal elliptic fibration which is unique and given by $|-K_X|$. Since the Mordell--Weil rank is a birational invariant, there is no loss in restricting ourselves to relatively minimal elliptic fibrations. We do so in what follows.

A \emph{multisection} is a curve $C\subset X$ such that the map $C \to B$ induced by $\pi$ is finite and flat. The degree of a multisection is the degree of the induced map. A \emph{bisection} is a multisection of degree $2$. We note in particular that if $X$ is geometrically rational, then any geometrically integral curve $C\subset X$ such that $C^2=0$ and $C\cdot K_X=-2$ is a bisection.

The following specialization theorem by Silverman (\cite[Thm. C]{Sil85}) is used at several instances.

\begin{theorem} \label{thm:Silverman}
	Let $\pi:X \to B$ be an elliptic surface
	over a number field $k$ with generic rank $r$.  
	Assume that either $g(B) = 0$ or that $X$ is non-constant.
	Then the set
	$\{ t \in B(k) : r_t < r\}$
	is finite.
\end{theorem}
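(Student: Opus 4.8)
I would treat this as the classical specialization theorem of Néron, sharpened by Silverman, and sketch a height-theoretic proof. Write $K = k(B)$ for the function field of the base and let $E/K$ be the generic fiber of $\pi$, an elliptic curve over $K$. For every $t \in B(k)$ lying below a smooth fiber there is a specialization homomorphism $\sigma_t \colon E(K) \to X_t(k)$, obtained by reducing sections modulo the place $t$; there are only finitely many $t$ below a singular fiber, so we may ignore these. By the Lang--Néron theorem, $E(K)$ is finitely generated of rank $r$, so it suffices to prove that $\sigma_t$ is injective for all but finitely many $t$: this forces $r_t \geq r$.

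Injectivity is detected by heights. Equip $E(K)$ with the Néron--Tate canonical height $\hat h_E$, a positive semidefinite quadratic form; the hypothesis that $g(B) = 0$ or that $X$ is non-constant guarantees that, after splitting off the (finite-rank, $\bar k$-constant) part of $E(K)$, which specializes isomorphically to each fiber and thus causes no trouble, the remaining group carries a \emph{positive definite} canonical height. We may therefore assume $\hat h_E$ is positive definite on $E(K) \otimes \RR$. Each smooth fiber $X_t$ is an elliptic curve over $k$ with its own canonical height $\hat h_{X_t}$, and the technical heart of the argument is the variation-of-heights estimate of Tate and Silverman: fixing an ample height $h_B$ on $B$, for each fixed $P \in E(K)$ one has
$$\hat h_{X_t}\bigl(\sigma_t(P)\bigr) = \hat h_E(P)\,h_B(t) + o\bigl(h_B(t)\bigr) \qquad \text{as } h_B(t) \to \infty.$$
Polarizing, the Néron--Tate pairings satisfy $\langle \sigma_t(P_i),\sigma_t(P_j)\rangle_{X_t} = \langle P_i,P_j\rangle_E\, h_B(t) + o(h_B(t))$.

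Now fix a basis $P_1,\dots,P_r$ of $E(K)$ modulo torsion and consider the Gram matrix $M(t) = \bigl(\langle\sigma_t(P_i),\sigma_t(P_j)\rangle_{X_t}\bigr)_{i,j}$. By the estimate above, $h_B(t)^{-1}M(t)$ converges to the Gram matrix of the $P_i$ with respect to $\hat h_E$, whose determinant is the regulator of $E/K$, a strictly positive real number. Hence $\det M(t) > 0$ once $h_B(t)$ is large enough, so the points $\sigma_t(P_1),\dots,\sigma_t(P_r)$ are $\RR$-linearly independent in $X_t(k)\otimes\RR$; together with the elementary fact that reduction is injective on the finite group $E(K)_{\mathrm{tors}}$ for all but finitely many $t$, this shows $\sigma_t$ is injective for all such $t$. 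Finally, the set of $t \in B(k)$ with $h_B(t)$ bounded above by a constant is finite by Northcott's theorem (valid on any projective curve over a number field with respect to an ample height), and adding back the finitely many $t$ below singular fibers still leaves a finite set; this is exactly the asserted finiteness of $\{t \in B(k) : r_t < r\}$.

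The main obstacle is the variation-of-heights estimate itself. One must compare the fibral canonical heights $\hat h_{X_t}$ with a single global height function on the total surface $X$ attached to a divisor restricting to (a multiple of) the origin on each fiber, and control the discrepancy uniformly in $t$. The contributions coming from the finitely many reducible fibers and places of bad reduction must be shown to grow only like $O(\log h_B(t))$, hence to be absorbed into the error term; carrying this out cleanly requires Néron's theory of local heights and a careful choice of the divisor on $X$. This is precisely the content of \cite[Thm.\ C]{Sil85}, which we invoke.
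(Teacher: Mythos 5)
The paper does not prove this statement at all: it is quoted as a known result, with the citation \cite[Thm.~C]{Sil85}, so there is no internal argument to compare yours against. Your height-theoretic sketch is the standard Néron--Silverman proof lying behind that citation, and its outline is sound: reduce to injectivity of the specialization homomorphism on the finitely generated group $E(K)$ (Lang--Néron, with the constant case over $\PP^1$ being degenerate), detect injectivity via the limit $\hat h_{X_t}(\sigma_t(P)) = \hat h_E(P)\,h_B(t) + o(h_B(t))$ applied to a Gram matrix, handle torsion separately, and finish with Northcott. The one wrinkle is your final step: you defer the variation-of-heights estimate to \cite[Thm.~C]{Sil85}, but that theorem is essentially the specialization statement being proved, so as written your argument is circular at the crux; the estimate you need is rather Silverman's earlier height-comparison theorem \cite{Sil} (sharpened by Tate for $B=\PP^1$). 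Since the paper itself treats the entire theorem as a black box, this is a citation slip rather than a mathematical gap, but if you intend your sketch to be a genuine proof you should attribute the key analytic input correctly and note that it, not the specialization theorem, is what controls the bad-fiber contributions uniformly in $t$.
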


As in \cite{LoSal}, we use the following result of the second named author to make sure that the rank jumps within the fibers.

\begin{theorem} \label{thm:pencil}
	Let $\pi:X \to B$ be an
	elliptic surface over a number field $k$  and 
	$L$ a pencil of curves on $X$ with an element
	which is a multisection of $\pi$.
	Assume that either $g(B) = 0$ or that $X$ is non-constant.
	Then for all but finitely many $D \in L$, the base-changed 
	surface $X \times_\pi D \to D$ has generic rank strictly larger
	than the generic rank of $X \to B$.
\end{theorem}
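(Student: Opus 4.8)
The plan is to attach to each base change $X \times_\pi D \to D$ a tautological section and to show that for all but finitely many $D \in L$ this section is Mordell--Weil independent from the pull-backs of the sections of $X \to B$, which forces the rank to jump. Let $\Lambda \cong \PP^1$ parametrise the pencil $L$. Being a multisection of $\pi$ is a nonempty open condition on $\Lambda$, so it fails only on a finite subset of $\Lambda(k)$; we may therefore assume $D \to B$ is finite flat, and (reducing to the case that $L$ has no fixed component and has geometrically irreducible general member, and passing to a relatively minimal regular model of $X \times_\pi D$, which changes neither $D$ nor the generic rank) the diagonal $D \hookrightarrow X \times_\pi D$, $p \mapsto (p,p)$, defines a section $\sigma_D$ of $X \times_\pi D \to D$. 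Since $D \to B$ is finite surjective, pull-back embeds the Mordell--Weil group of $X \to B$ into that of $X \times_\pi D \to D$ and preserves $\ZZ$-linear independence; fixing (the images of) a basis $P_1,\dots,P_r$ of $\mathrm{MW}(X/B)$ modulo torsion, it suffices to prove that $\sigma_D$ lies in the $\QQ$-span of $P_1,\dots,P_r$ for only finitely many $D$.

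The crux is to establish this rank jump for the \emph{generic} member. Set $K = k(\Lambda)$ and let $D_K$ be the generic member of $L$ over $K$, a multisection of $X_K = X \times_k K \to B_K$, with tautological section $\sigma_{D_K}$. Suppose $\sigma_{D_K} \in \QQ\langle P_1,\dots,P_r\rangle$; clearing a torsion class we get an identity $m\,\sigma_{D_K} = \sum_i n_i P_i$ with $m \in \ZZ_{\geq 1}$ and fixed $n_i \in \ZZ$. Spreading this relation over a dense open $\Lambda' \subseteq \Lambda$ and applying specialization at $\lambda \in \Lambda'(\bar k)$ (a homomorphism carrying $\sigma_{D_K}$ to $\sigma_{D_\lambda}$ and $P_i$ to its pull-back $(P_i)_{D_\lambda}$) yields $m\,\sigma_{D_\lambda} = \sum_i n_i (P_i)_{D_\lambda}$ for all $\lambda \in \Lambda'(\bar k)$. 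Now a general point $x \in X(\bar k)$ lies on a unique member $D_\lambda$ of $L$ with $\lambda \in \Lambda'$, and the fibre of $X \times_\pi D_\lambda \to D_\lambda$ over the point $x \in D_\lambda$ is canonically $X_{\pi(x)}$, on which $\sigma_{D_\lambda}$ takes the value $x$ and $(P_i)_{D_\lambda}$ the value $P_i(\pi(x))$. Evaluating the relation there gives $[m]x = \sum_i n_i P_i(\pi(x))$ in $X_{\pi(x)}$ for general $x \in X(\bar k)$. The right-hand side depends only on $\pi(x)$, so the fibrewise multiplication-by-$m$ map $[m] \colon X \dashrightarrow X$ over $B$ would factor through $\pi$; but $[m]$ is dominant (it is surjective on each smooth fibre, since $m \neq 0$), while $\pi$ has one-dimensional image — a contradiction. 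Hence $\sigma_{D_K}$ is independent from $P_1,\dots,P_r$, i.e.\ $X_K \times_\pi D_K \to D_K$ has generic rank $\geq r+1$.

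To conclude, since $g(B) = 0$ or $X$ is non-constant, the base-changed surfaces in the family over $\Lambda$ are non-constant, so Silverman's specialization theorem (Theorem~\ref{thm:Silverman}, applied to this one-parameter family of elliptic surfaces over $\Lambda$; equivalently, the canonical-height argument of \cite{Sil85}, with Northcott finiteness ensuring the exceptional set on $\Lambda(k) \subseteq \PP^1(k)$ is finite rather than merely of bounded height) gives that the specialization map $\mathrm{MW}(X_K \times_\pi D_K/D_K) \to \mathrm{MW}(X \times_\pi D_\lambda/D_\lambda)$ is injective for all but finitely many $\lambda \in \Lambda(k)$. For these $\lambda$ the classes $P_1,\dots,P_r,\sigma_{D_\lambda}$ stay independent, so $X \times_\pi D_\lambda \to D_\lambda$ has generic rank $\geq r+1$, as required.

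I expect this last step to be the main obstacle: the evaluation-plus-dominance argument cleanly forces a rank jump at the generic member of $L$, but upgrading this to \emph{all but finitely many} members requires the specialization theorem in a genuinely relative form; working only with individual members, one concludes merely that the exceptional set is contained in a countable union of finite sets. Secondary technical points are the singularities of $X \times_\pi D$ above the ramification of $D \to B$ (handled by a relatively minimal regular model) and the reduction to a pencil $L$ with no fixed component and irreducible general member.
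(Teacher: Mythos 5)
The paper does not actually prove Theorem \ref{thm:pencil}: it is quoted from the second author's earlier work (\cite{salgado1}, as used again in \cite{LoSal}), so there is no internal proof to compare against and I assess your argument on its own terms. Your core argument at the generic member is correct: the diagonal section $\sigma_{D_K}$, a putative relation $m\,\sigma_{D_K}=\sum_i n_iP_i$ over $K=k(\Lambda)$, spreading it out over a dense open of $\Lambda$, and evaluating at a general $x\in X(\bar k)$ (which lies on a unique member, and at which $\sigma$ takes the value $x$ while the pulled-back $P_i$ take the values $P_i(\pi(x))$) force the fibrewise map $[m]$ to be constant along general fibres of $\pi$, which is absurd since $[m]$ is surjective on each smooth fibre. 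The surrounding reductions (irreducible general member, $E(k(B))\subset E(k(D))$ because $k(D)/k(B)$ is a finite field extension, relatively minimal regular model) are unproblematic.

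The genuine gap is the step you flag yourself: Theorem \ref{thm:Silverman} cannot be ``applied to this one-parameter family of elliptic surfaces over $\Lambda$''. Silverman's theorem specializes an elliptic curve over $k(B)$ at points $t\in B(k)$, where the special fibres are elliptic curves over the \emph{number field} $k$, and its proof rests on comparing canonical heights with the height on $B$ and then invoking Northcott finiteness over $k$. In your family the fibres over $\lambda\in\Lambda(k)$ are elliptic curves over the function fields $k(D_\lambda)$, so neither the statement nor the Northcott mechanism applies, and as written you only get the countable-union-of-finite-sets conclusion you mention. The repair is geometric rather than arithmetic: it suffices to preserve independence over $\bar k(D_\lambda)$, where the canonical height is Shioda's intersection-theoretic pairing. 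One has $\langle P_{i,\lambda},P_{j,\lambda}\rangle=\deg(D_\lambda\to B)\,\langle P_i,P_j\rangle$, while $\langle\sigma_{D_\lambda},\sigma_{D_\lambda}\rangle$, $\langle\sigma_{D_\lambda},P_{i,\lambda}\rangle$ and the torsion orders are uniformly bounded, because outside finitely many $\lambda$ the relatively minimal models of $X\times_\pi D_\lambda$ together with these sections realize only finitely many combinatorial configurations (fibre types, components met, intersection numbers). Positive definiteness of the pairing (using non-constancy of the base-changed surface, or arguing modulo the constant part in the isotrivial case) then bounds the possible coefficient vectors $(m,n_1,\dots,n_r)$ and torsion of any dependence relation, leaving finitely many candidate relations; each is a closed condition on $\Lambda$ which fails at the generic member by your evaluation argument, hence holds for only finitely many $\lambda$. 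With that substitution, coefficient bounds via the Mordell--Weil lattice in place of Silverman, your proof is complete; alternatively one can simply invoke \cite{salgado1}, as the paper under review does.
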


\subsection{Conic bundles}

We adopt the following definition of conic bundles.

\begin{definition}\label{def:conic bundle}
	A \emph{conic bundle} on a smooth projective surface $X$
	is a dominant morphism $X \to \PP^1$ whose generic
	fiber is a smooth geometrically irreducible curve of genus $0$.
\end{definition}

The reader should be aware that this definition of conic bundle is more general than the one of \emph{standard conic bundle} used for instance by Iskovskikh in the classification of minimal models of rational surfaces over arbitrary fields (\cite{Isk79}). In particular, the number of singular fibers is not determined by the rank of the Picard group over an algebraic closure of the ground field. 

On rational elliptic surfaces, the existence of a conic bundle is equivalent to the presence of a bisection of arithmetic genus 0 for the elliptic fibration. This fact has been explored in the main results of \cite{LoSal} and is the content of the following:

\begin{lemma}\label{lem:bisection}
Let $\pi: X \rightarrow \mathbb{P}^1$ be a geometrically rational elliptic surface. Then $X$ admits a conic bundle if, and only if,
$\pi$ admits a bisection of arithmetic genus 0.
\end{lemma}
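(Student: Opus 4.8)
The plan is to establish both directions by translating between conic bundles $X \to \PP^1$ and divisor classes on $X$, using the fact that on a geometrically rational elliptic surface $X$ one has $-K_X$ equal to the class of a fiber $F$ of $\pi$, together with the Shioda--Tate description of $\NS(\bar X)$ and the adjunction formula. For the easier direction, suppose $\pi$ admits a bisection $C$ with $p_a(C) = 0$. Then $C$ is a geometrically integral curve with $C \cdot F = 2$ and, by adjunction, $C^2 + C \cdot K_X = 2p_a(C) - 2 = -2$; since $K_X = -F$ this gives $C^2 = -2 + C\cdot F = 0$. So $C$ is a curve of self-intersection $0$ with $p_a(C)=0$, hence (after checking it is base-point free, which follows from $C^2 = 0$ and $C$ being nef, or from Riemann--Roch on the surface giving $\dim |C| \geq 1$ and $|C|$ having no fixed part) the linear system $|C|$ defines a morphism $X \to \PP^1$ whose generic fiber is a smooth geometrically irreducible genus $0$ curve. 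That morphism is the desired conic bundle, and $C$ being a bisection of $\pi$ means the conic bundle and the elliptic fibration are "transverse" in the expected way. The one point requiring a little care here is geometric irreducibility of the generic fiber of $|C|$: this should follow because $C$ itself is geometrically integral and a general member of $|C|$ specializes to it, or more robustly from the fact that $C^2 = 0$ forces the fibers of $\phi_{|C|}$ to be numerically proportional to $C$.

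For the converse, suppose $\beta: X \to \PP^1$ is a conic bundle and let $C$ be the class of a fiber of $\beta$. Then $C^2 = 0$ and, since the generic fiber has genus $0$, adjunction gives $C \cdot K_X = -2$, hence $C \cdot F = C \cdot (-K_X) = 2$, so $C$ is a multisection of $\pi$ of degree $2$, i.e.\ a bisection. A general fiber of $\beta$ is smooth and geometrically irreducible, in particular geometrically integral, so $p_a(C) = g(C) = 0$ for such a fiber. Thus $\pi$ admits a bisection of arithmetic genus $0$, namely any smooth fiber of the conic bundle.

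The main obstacle, and the place where I expect the argument to need genuine input rather than formal manipulation, is the implication "$C^2 = 0$, $p_a(C) = 0$, $C$ geometrically integral $\Rightarrow$ $|C|$ defines a conic bundle." One must show $h^0(X, \OO_X(C)) = 2$ and that $|C|$ is base-point free with connected fibers. Riemann--Roch on the surface together with $K_X = -F$ and $C \cdot F = 2$ gives $\chi(\OO_X(C)) = \chi(\OO_X) + \tfrac12(C^2 - C\cdot K_X) = 1 + 1 = 2$; since $X$ is geometrically rational, $h^1(\OO_X) = h^2(\OO_X) = 0$, and a vanishing argument (e.g.\ $h^2(\OO_X(C)) = h^0(\OO_X(K_X - C)) = 0$ because $(K_X - C)\cdot F < 0$) yields $h^0(\OO_X(C)) = 2$. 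Freeness of $|C|$ then follows from the classification of linear systems with self-intersection zero on a surface (a nef divisor with $C^2 = 0$ spanning a pencil of irreducible curves has no base points), and the Stein factorization of $\phi_{|C|}$ is an isomorphism onto $\PP^1$ because $C$ is integral of arithmetic genus $0$. This is essentially the content already used in \cite{LoSal}, so I would cite or mirror that argument rather than reprove it from scratch.
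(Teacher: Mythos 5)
Your proposal is correct and follows essentially the same route as the paper: both directions rest on the adjunction formula together with the fact that $\pi$ is given by $|-K_X|$ (so a fiber of a conic bundle has $D^2=0$, $D\cdot K_X=-2$, hence is a genus-$0$ bisection), and the converse uses Riemann--Roch to show the bisection moves in a pencil. Your write-up is simply more explicit about the points the paper leaves implicit (base-point freeness, $h^0=2$, and geometric irreducibility of the generic member), which is fine and does not change the argument.
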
 

\begin{proof}
Recall that $\pi$ is given by $|-K_X|$. Let $D$ be a smooth fiber of a conic bundle on $X$. By the adjunction formula, we have $D^2+D\cdot K_X=-2$. Since $X$ does not admit curves with self-intersection strictly less than $-2$, the self-intersection $D^2$ can only be $-2,-1$ or $0$. If $D^2<0$ the curve $D$ is rigid, i.e., the linear system $|D|$ has dimension zero, and, in particular, $D$ cannot be a fiber of a conic bundle. Hence $D^2=0$ and $D\cdot K_X=-2$. Since $\pi$ is given by $|-K_X|$, the conic $D$ is a bisection of arithmetic genus zero of $\pi$.
Vice-versa, let $D$ be a bisection of $\pi $ of arithmetic genus zero. Then adjunction gives $K_X\cdot D=-2$ and by Riemann-Roch the linear system $|D|$ has projective dimension 1, i.e., is a conic bundle over a $\mathbb{P}^1$.
\end{proof}

\begin{definition}
Let $F$ be a fiber of $\pi:X \rightarrow \mathbb{P}^1$ and $C\subset X$ a multisection of $\pi$. We say that $C$ is ramified at $F$ if the intersection $C\cap F$ is non-reduced as a subscheme of $X$, or equivalently, if $\pi|_C: C \rightarrow \mathbb{P}^1$ is ramified at $F$. 
\end{definition}

\subsection{Non-reduced fibers}
The existence of a section for an elliptic fibration prevents the existence of multiple fibers. There might exist, nevertheless, non-reduced fibers, i.e., fibers with multiple components. The admissible singular fibers were classified by Kodaira and can be detected, for instance, by inspection of the discriminant polynomial of the Weierstrass equation in the form 
$$y^2+a_1xy+a_3y=x^3+a_2x^2+a_4x+a_6,\,\text{ where } a_i\in k(\Bbb{P}^1)\text{ for all }i.$$

Alternatively, Dokchitsers' refinement of Tate's algorithm allows one to detect the type of fiber by inspecting the coefficients of the equation above. More precisely, if $v$ is a place in $\Bbb{P}^1$, the fiber type associated with $v$ is given by the table below \cite[Thm. 1]{Dok13}.

\begin{table}[h]
\centering
\begin{tabular}{|c|c|c|c|c|c|c|c|c|} 
\hline
\multirow{1}{*}{} & \multirow{1}{*}{$\text{II}$} & \multirow{1}{*}{$\text{III}$} & \multirow{1}{*}{$\text{IV}$} & \multirow{1}{*}{$\text{I}^*_0$} & \multirow{1}{*}{$\text{I}^*_{n>0}$} & \multirow{1}{*}{$\text{IV}^*$} & \multirow{1}{*}{$\text{III}^*$} & \multirow{1}{*}{$\text{II}^*$}\\ 
\hline
\multirow{2}{*}{$\min_i\frac{v(a_i)}{i}$} & \multirow{2}{*}{$\frac{1}{6}$} & \multirow{2}{*}{$\frac{1}{4}$} & \multirow{2}{*}{$\frac{1}{3}$} & \multirow{2}{*}{$\frac{1}{2}$} & \multirow{2}{*}{$\frac{1}{2}$} & \multirow{2}{*}{$\frac{2}{3}$} & \multirow{2}{*}{$\frac{3}{4}$} & \multirow{2}{*}{$\frac{5}{6}$}\\ 
& & & & & & & &\\
\multirow{2}{*}{\tiny extra condition} & \multirow{2}{*}{} & \multirow{2}{*}{} & \multirow{2}{*}{\tiny $v(b_6)=2$} & \multirow{2}{*}{\tiny $v(d)=6$} & \multirow{2}{*}{\tiny $\begin{matrix}v(d)>6\\v(a_2^2-3a_4)=2\end{matrix}$ } & \multirow{2}{*}{\tiny $v(b_6)=4$} & \multirow{2}{*}{} & \multirow{2}{*}{}\\ 
& & & & & & & &\\
\hline
\end{tabular}\caption{Dokchitsers' refinement of Tate's algorithm.}\label{table::Dokchitser}
\end{table}
$$(\text{where }b_6:=a_3^2+4a_6=\text{Disc}(y^2+a_3y-a_6)\text{ and } d:=\text{Disc}(x^3+a_2x^2+a_4x+a_6))$$

Dokchitsers' Table \ref{table::Dokchitser} allows us to readily see that rational elliptic fibrations with a non-reduced fiber at $t=\infty$, except possibly $I_0^*$, admit a conic bundle over the $x$-line. More precisely, we have the following:

\begin{proposition}\label{conicbundles}
Let $\pi: X\rightarrow \mathbb{P}^1_t$  be a rational elliptic fibration defined over $k$ with a non-reduced fiber $F$ at $t=\infty$ which is not of type $I_0^*$. Then the  $X$ admits a Weierstrass equation of the form
$$y^2+a_1(t)xy+a_3(t)y=x^3+a_2(t)x^2+a_4(t)x+a_6(t),\,\text{ with } \deg a_i\leq 2 \text{ for all }i.$$
In particular, $X$ admits a conic bundle over the $x$-line.
\end{proposition}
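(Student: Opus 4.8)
\emph{The plan} is to realise $X$ by its global minimal Weierstrass model over $\PP^1$, read off degree bounds on the $a_i$ from Dokchitser's Table~\ref{table::Dokchitser} at the place $t=\infty$, repair by an explicit substitution the single fibre type where the naive bound is too weak, and then identify the conic bundle with the projection to the $x$-line. Concretely: since $X$ is a relatively minimal rational elliptic surface, $\chi(\OO_X)=1$, so $X$ is the smooth relatively minimal model of a Weierstrass equation $y^2+a_1xy+a_3y=x^3+a_2x^2+a_4x+a_6$ over $\PP^1_k$ that is minimal at every place, with $a_i\in k[t]$ and $\deg a_i\le i$. By Kodaira's classification the non-reduced fibre at $t=\infty$, not of type $I_0^*$, is of type $I_n^*$ with $n\ge1$, $IV^*$, $III^*$ or $II^*$; passing to the equation rescaled to be integral at $t=\infty$ (which multiplies $a_i$ by $s^i$, $s=1/t$, making it vanish to order $i-\deg a_i$ there), Table~\ref{table::Dokchitser} reads in all four cases $\min_i(i-\deg a_i)/i\ge\tfrac12$, i.e.\ $\deg a_1=0$, $\deg a_2\le1$, $\deg a_3\le1$, $\deg a_4\le2$ and $\deg a_6\le3$. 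For the types $IV^*$, $III^*$, $II^*$ the minimum is $\ge\tfrac23$, forcing $\deg a_6\le2$ and hence $\deg a_i\le2$ for all $i$; so it remains only to treat the case $\deg a_6=3$.

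In that case $(6-\deg a_6)/6=\tfrac12$, so $\min_i(i-\deg a_i)/i=\tfrac12$ and the fibre is of type $I_n^*$, necessarily with $n\ge1$ by hypothesis; Table~\ref{table::Dokchitser} then imposes in addition the condition $v(d)>6$ on $d:=\disc_x(x^3+a_2x^2+a_4x+a_6)$, which (comparing top-degree coefficients in the rescaled equation) amounts to the vanishing of the coefficient of $t^6$ in $d$. Writing $\beta,\alpha,\gamma$ for the coefficients of $t,t^2,t^3$ in $a_2,a_4,a_6$ respectively (so $\gamma\ne0$), that coefficient equals $\disc(X^3+\beta X^2+\alpha X+\gamma)$, so the cubic $c(X):=X^3+\beta X^2+\alpha X+\gamma$ has a repeated root. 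A repeated root of a cubic over $k$ lies in $k$, being a root of $\gcd(c,c')\in k[X]$ — which is linear, or equals $(X-\lambda_0)^2$ with $\lambda_0=-\beta/3$ if $c$ has a triple root — so there is $\lambda_0\in k$ with $c(\lambda_0)=0$. The $k$-rational substitution $x\mapsto x+\lambda_0 t$ replaces $a_6$ by $a_6+\lambda_0a_4t+\lambda_0^2a_2t^2+\lambda_0^3t^3$, whose coefficient of $t^3$ is $c(\lambda_0)=0$, while keeping $a_1$ constant and $\deg a_2\le1$, $\deg a_3\le1$, $\deg a_4\le2$ (the point is that $\deg a_1=0$ keeps the new $a_3=a_3+\lambda_0a_1t$ of degree $\le1$). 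After this substitution $\deg a_i\le2$ for every $i$.

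For the last assertion, project the Weierstrass model to the $x$-line by $(x,y,t)\mapsto x$. Its fibre over $x=x_0$ is the curve $y^2+(a_1x_0+a_3(t))y=x_0^3+a_2(t)x_0^2+a_4(t)x_0+a_6(t)$, i.e.\ (completing the square in $y$) the double cover of $\PP^1_t$ branched along the zero locus of $(a_1x_0+a_3)^2+4(x_0^3+a_2x_0^2+a_4x_0+a_6)$, a polynomial in $t$ of degree $\le2$ because $\deg a_1=0$, $\deg a_3\le1$ and $\deg a_2,a_4,a_6\le2$. Hence a general such fibre is a smooth bisection $C$ of $\pi$ with $p_a(C)=0$ (a routine degree count in $x_0$, using that the $t$-constant term of the branch polynomial is a non-square cubic in $x_0$, rules out the fibre becoming reducible or non-geometrically-integral). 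By Lemma~\ref{lem:bisection}, $X$ then carries a conic bundle, and since $C^2=0$ the pencil $|C|$ is base-point-free; this is exactly the conic bundle given by the $x$-projection.

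\emph{Expected main obstacle.} The only genuinely non-formal step is the reduction from $\deg a_6\le3$ to $\deg a_6\le2$ for the $I_n^*$ fibres: here one must use the refined condition $v(d)>6$ of Table~\ref{table::Dokchitser} (rather than just $\min_i(i-\deg a_i)/i=\tfrac12$) together with the fact that a repeated root of a cubic over $k$ is $k$-rational. This is also precisely where the hypothesis $F\ne I_0^*$ is used: for $I_0^*$ one has $v(d)=6$, the cubic $c$ generically has three distinct, non-$k$-rational roots, and $\deg a_6=3$ cannot in general be lowered.
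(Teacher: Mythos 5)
Your proposal is correct, and up to the decisive step it follows the same route as the paper: take the globally minimal Weierstrass model of the rational elliptic surface with $\deg a_i\le i$, read off valuations at $t=\infty$ from Table~\ref{table::Dokchitser}, and get $\deg a_i\le i/3\le 2$ at once for the types $II^*,III^*,IV^*$, and $\deg a_i\le i/2$ for the $I^*$ types. Where you genuinely diverge is the case $I^*_{n\ge1}$: the paper asserts that the discriminant condition in the table bounds $\deg a_6$ by $2$ outright, whereas (as your argument makes clear) the condition $v_\infty(d)>6$ only forces the ``leading cubic'' $c(X)=X^3+\beta X^2+\alpha X+\gamma$ formed from the top coefficients of $a_2,a_4,a_6$ to have vanishing discriminant, and $\deg a_6=3$ can genuinely occur in a minimal model --- e.g.\ $y^2=x^3-3t^2x+2t^3+1$ has an $I_3^*$ fibre at infinity with $a_6=2t^3+1$, and only after the shear $x\mapsto x+t$ does it take the shape $y^2=x^3+3tx^2+1$. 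Your extra step (the repeated root $\lambda_0$ of $c$ lies in $k$ because it is a root of $\gcd(c,c')$, and the substitution $x\mapsto x+\lambda_0 t$ kills the $t^3$-term of $a_6$ while preserving the other bounds since $a_1$ is constant) is exactly what is needed to obtain the existential statement ``$X$ admits a Weierstrass equation with $\deg a_i\le 2$''; so on this point your proof repairs, rather than reproduces, the paper's argument, and it also isolates cleanly where the hypothesis $F\ne I_0^*$ is used. One caveat on your final step: the parenthetical reason that a general fibre of the $x$-projection is a smooth, geometrically integral conic (``the $t$-constant term of the branch polynomial is a non-square cubic'') is not by itself conclusive, since a degenerate pencil corresponds to the identity $q_1^2=4q_0q_2$ in $k[x]$, which non-squareness of $q_0$ alone does not exclude; a clean argument is generic smoothness in characteristic zero applied to the smooth locus of the Weierstrass model, together with the observation that a pencil consisting entirely of degenerate conics would force the completed-square equation to be independent of $t$, contradicting the presence of a singular fibre. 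Since the paper offers no justification at all for its ``in particular'', this is a refinement rather than a defect, but it is worth tightening before relying on Lemma~\ref{lem:bisection}.
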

\begin{proof}
To analyze the behavior of the fiber at infinity, i.e., $t=(1:0) \in \mathbb{P}^1$, we write the (long) Weierstrass equation of $X$ with homogeneous coefficients:
$$y^2+a_1(t,u)xy+a_3(t,u)y=x^3+a_2(t,u)x^2+a_4(t,u)x+a_6(t,u).$$
 Since $X$ is rational, $\deg a_i(t,u)=i$. Dokchitsers' Table \ref{table::Dokchitser} tells us that $v_{\infty}(\frac{a_i}{i})\geq \frac{2}{3}$, if $F$ is of type $IV^*, III^*$ or $II^*$. Hence $\deg_u a_i \geq  \frac{2i}{3}$ or $a_i=0$, immediately bounding the degrees of $a_1, a_2, a_3, a_4$ and $a_6$ in the variable $t$ from above by 2, as claimed.
 It remains to check that this also holds for fibers of type $I_n^*$ with $n\neq 0$. In this case, Dokschitsers' table tells us that $v_{\infty}(\frac{a_i}{i})\geq \frac{1}{2}$ and $v_{\infty}(d)\geq 6$. The former implies that $\deg_t a_i(u,t)\leq 2$ or $a_i=0$, for $i=1,2,3$ and $4$, while the latter gives us $\deg_t a_6(u,t)\leq 2$, or $a_6=0$.
\end{proof}

\begin{corollary}
Let $\pi: X \rightarrow B$ be a geometrically rational elliptic surface defined over a field $k$. Suppose that $\pi$ admits a unique non-reduced fiber. Then $X$ is $k$-unirational.
\end{corollary}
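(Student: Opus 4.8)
The plan is to reduce $X$ birationally to a surface whose $k$-unirationality is classical, using Proposition~\ref{conicbundles} when possible and treating Kodaira type $I_0^*$ by hand.

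First I would normalise the situation. As $\pi$ has a \emph{unique} non-reduced fibre $F$, its support is a closed point of $B$ fixed by $\Gal(\bar k/k)$; if that point had degree $>1$ it would split, over $\bar k$, into several non-reduced fibres, contradicting uniqueness, so $F$ lies over a $k$-rational point of $B\cong\PP^1$. After a $k$-automorphism of $B$ I may assume $F$ sits over $\infty$. Fix the zero section $O$ and let $\Theta_0\subseteq F$ be the component it meets; then $\Theta_0$ is defined over $k$, and — $F$ being of type $I_n^*$, $IV^*$, $III^*$, $II^*$ or $I_0^*$ — every component of $F$ is a smooth rational curve of self-intersection $-2$.

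Suppose $F$ is \emph{not} of type $I_0^*$. Then Proposition~\ref{conicbundles} gives a conic bundle $\phi\colon X\to\PP^1_k$, and the key point is that some $k$-rational component of $F$ is a multisection of $\phi$. Not all components of $F$ can be $\phi$-vertical: otherwise, $F$ being connected, it would lie in a single fibre $D_0$ of $\phi$, so $[F]$ would lie in the span of the components of $D_0$ with $[F]^2=0$, hence in the radical $\QQ\cdot[D_0]$ of the Zariski form on that span; pairing $[F]=\mu[D_0]$ with a general fibre $G$ of $\pi$ gives $0=F\cdot G=\mu(D_0\cdot G)=2\mu$, so $[F]=0$, absurd. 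Thus some component is a multisection, and a short bookkeeping from $2=D\cdot G=\sum_i m_i\,(D\cdot\Theta_i)$ for a general $D\in\phi$ (with fibre multiplicities $m_i\geq 1$) shows one may take such a component $\Theta$ that is $\Gal(\bar k/k)$-invariant — in most cases $\Theta=\Theta_0$ (and in the few cases where the multisection components form a Galois-orbit of leaves one falls back on the $I_0^*$ argument below). Then $\Theta\cong\PP^1_k$ and $\psi:=\phi|_\Theta\colon\Theta\to\PP^1$ is finite flat of some degree $m\geq1$. Now I would base change: $X':=X\times_\phi\Theta$ is a conic bundle over $\Theta\cong\PP^1$ admitting a section (the diagonal $p\mapsto(p,p)$), so $X'$ is $k$-birational to a $\PP^1$-bundle over $\PP^1$, hence $k$-rational; the projection $X'\to X$ is finite flat of degree $m$, hence dominant, so $X$ is $k$-unirational.

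The remaining case $F=I_0^*$ is, I expect, the main obstacle: here Proposition~\ref{conicbundles} says nothing, and in fact an $I_0^*$-surface need not possess any $k$-conic bundle at all (one can see this by computing the nef cone of the $\Gal(\bar k/k)$-invariant part of $\NS(X)$ when the Galois action on the Mordell--Weil lattice is generic: there is then no nef class of square $0$ and anticanonical degree $2$). So I would argue directly by blowing down, over $k$, first the $(-1)$-curve $O$, then the leaf $\Theta_0$ of $F$ met by $O$ (now a $(-1)$-curve, since $\Theta_0^2=-2$ and $O\cdot\Theta_0=1$), then the central multiplicity-$2$ component of $F$ (likewise now a $(-1)$-curve). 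This produces a birational morphism over $k$ from $X$ onto a surface $Y$ with $K_Y^2=3$ and $-K_Y$ nef, i.e.\ a (possibly singular) cubic surface, with $Y(k)\neq\emptyset$ (the image of $O$). A cubic surface over an infinite field with a rational point — smooth, or with rational double points — is $k$-unirational by the classical arguments, and hence so is $X$. The hard part will be carrying out this $I_0^*$ step carefully: locating a usable $k$-rational curve without a conic bundle to lean on, checking the self-intersection bookkeeping through the successive contractions and the nefness of $-K_Y$, and invoking unirationality of cubic surfaces with a rational point in the possibly singular case.
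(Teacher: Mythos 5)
Your route is genuinely different from the paper's, whose proof is a one-liner: contract the zero section and apply \cite[Thm. 7]{KM} to the resulting surface, which carries a conic bundle structure of degree $1$ (the conic bundle over the $x$-line from Proposition \ref{conicbundles} descends, the conics being disjoint from the zero section). Your alternative -- split the conic bundle by base-changing along a horizontal component of the non-reduced fibre, and handle $I_0^*$ by contracting $O$, $\Theta_0$ and the central component down to a cubic -- is plausible in outline, and the lattice argument that not every component of $F$ can be vertical for $\phi$ is correct. But as written it has genuine gaps.

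First, the existence of a $\Gal(\bar{k}/k)$-stable horizontal component is not established: the bookkeeping $2=D\cdot F=\sum_i m_i(D\cdot\Theta_i)$ leaves open the case of exactly two reduced horizontal components forming a Galois orbit, which by Lemma \ref{lem: support_RNRF} can occur precisely for $IV^*$ (the leaves $\Theta_4,\Theta_6$) and $I_n^*$ (the far leaves $\Theta_2,\Theta_3$); your fallback for this case is "the $I_0^*$ argument below", but that argument is tailored to the $I_0^*$ dual graph and does not apply as stated to these types (one would need a different contraction scheme, which you do not give). Second, the assertion $\Theta\cong\PP^1_k$ needs proof: a Galois-stable smooth rational curve is only a conic over $k$, and without a $k$-point on $\Theta$ the surface $X\times_\phi\Theta$ is merely birational to $\PP^1\times\Theta$, which is not $k$-unirational unless $\Theta(k)\neq\emptyset$. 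This is fixable (if $D\cdot\Theta=1$ the point $D\cap\Theta$ is $k$-rational, and $\Theta_0$ meets $O$ in a $k$-point), but it is exactly the kind of rationality issue the statement is about, so it cannot be elided. Third, in the $I_0^*$ case the entire weight rests on $k$-unirationality of a possibly singular cubic surface from a smooth $k$-point, which you invoke as "classical" without proof or precise reference; as you concede yourself, this is the hard part, so that case is not closed (the paper instead relies on Koll\'ar--Mella). Finally, your aside that an $I_0^*$ surface may admit no conic bundle over $k$ at all is in tension with Remark \ref{rmk:stillrational}(ii), which asserts such surfaces always admit one (just not necessarily an RNRF one); that claim of yours is not needed for your argument, but it should not be stated as fact.
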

\begin{proof}
There are several straightforward ways to prove this corollary. One is to directly apply \cite[Thm. 7]{KM} to the surface obtained contracting the zero section which is a conic bundle of degree 1. \end{proof}

\begin{corollary}
Let $\pi: X\rightarrow \mathbb{P}^1_t$  be a rational elliptic fibration defined over $k$ with a non-reduced fiber that is not of type $I_0^*$. Then the set 
\[
\{ t\in \mathbb{P}^1(k); r_t\geq r+2\} \subset \mathbb{P}^1(k)
\]
is not thin.
\end{corollary}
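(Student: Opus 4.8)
The plan is to deduce the statement from Proposition~\ref{conicbundles} together with the non-thinness results of \cite{LoSal}; it is the two-base-change counterpart of the main theorem of this paper, so no geometric input is needed beyond the existence of a conic bundle. First I would invoke Proposition~\ref{conicbundles}: since $\pi$ has a non-reduced fibre which is not of type $I_0^*$, the surface $X$ carries a conic bundle $p\colon X\to\PP^1$, and by Lemma~\ref{lem:bisection} its fibres form a pencil $\mathcal{L}$ of arithmetic-genus-$0$ bisections of $\pi$. Thus $X$ is a geometrically rational elliptic surface admitting a conic bundle, which is exactly the situation treated in \cite{LoSal}, and one applies that result.

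Spelling out the argument, one proceeds by two successive base changes \`a la N\'eron. For a conic $C=C_\lambda\in\mathcal{L}$ one forms the elliptic surface $X_C:=X\times_\pi C\to C$ over $C\simeq\PP^1$; by Theorem~\ref{thm:pencil}, for all but finitely many $\lambda$ its generic rank is at least $r+1$, the extra rank being witnessed by the tautological section of $X_C\to C$ cut out by the diagonal of $C$. Iterating once more --- base-changing further by a second conic of $\mathcal{L}$, equivalently by the curve $C_\lambda\times_\pi C_\mu$, whose arithmetic genus is at most $1$ --- one obtains, for a two-parameter family of pairs $(\lambda,\mu)$, elliptic surfaces of generic rank at least $r+2$; it is the genus of $C_\lambda\times_\pi C_\mu$ that confines the iteration to two steps when the conics have no common ramification point. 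For each such surface Theorem~\ref{thm:Silverman} gives $r_t\ge r+2$ for all but finitely many of the values $t\in\PP^1(k)$ that it sees. Finally one assembles these into a single family: letting $\mathcal{V}$ be the total space of this universal base change and $g\colon\mathcal{V}\to\PP^1$ the natural map to the base of $\pi$, $g$ is dominant with geometrically integral generic fibre, and $\mathcal{V}$ carries a non-thin set of $k$-rational points; since the preimage under such a $g$ of a thin set is thin, the image of the $k$-points at which the construction behaves as intended is not thin, and this image is contained in $\{t\in\PP^1(k):r_t\ge r+2\}$.

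The crux is this last, non-thinness, step. Producing infinitely many $t$ with $r_t\ge r+2$ is immediate from the two base changes and Theorem~\ref{thm:Silverman}; the real work, carried out in \cite{LoSal} and reused here, is to organise the construction into one auxiliary variety $\mathcal{V}$ whose rational points are not thin and which surjects, via a morphism with geometrically integral generic fibre, onto a set containing the rank-jump locus. This forces one to keep careful track of the fields of definition of the conics $C_\lambda$ and of the new sections, so that the rational points produced genuinely lie over $k$-rational values of $t$, and to verify the geometric integrality of the generic fibre of $g$ so that the thin-set pullback principle applies. Everything else --- the two base changes, the arithmetic-genus computation that keeps the intermediate base of genus at most $1$, and the specialization --- is routine given Proposition~\ref{conicbundles} and Theorems~\ref{thm:pencil} and~\ref{thm:Silverman}.
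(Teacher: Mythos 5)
Your proposal matches the paper's proof: the corollary is obtained exactly by combining Proposition \ref{conicbundles} (the conic bundle over the $x$-line furnished by the non-reduced fiber) with the non-thinness theorem of Loughran--Salgado, which is all the paper does. Your additional sketch of the two-base-change and non-thinness machinery is just an outline of what is already proved in \cite{LoSal} and is correctly deferred to that reference, so no new gap arises.
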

\begin{proof}
Proposition \ref{conicbundles} puts us in position to apply \cite[Theorem 1.1]{LoSal}.
\end{proof}

\begin{remark} 
\textbf{ }
\begin{itemize}
\item[i)] Rational elliptic surfaces with 2 fibers of type $I_0^*$ have been treated separately in \cite{LoSal}.
\item[ii)] Unfortunately one might not be able to use the conic bundles from Proposition \ref{conicbundles} to show that there is a rank jump of 3, the main aim of this note. Indeed, the hypothesis that the restriction of $\pi$ to the conics has a common ramification is crucial. By \cite[Lemma 2.10]{LoSal}, this can only happen over non-reduced fibers. Example \ref{example2} presents a surface with a fiber of type $IV^*$ whose conic bundle over the $x$-line is not ramified over the non-reduced fiber.
\end{itemize}
\end{remark}

The following lemma is relevant for the construction of conic bundles in Section \ref{sec:RNRF}. It provides a supply of divisors that can be used to form the support of a genus 0 bisection over $k$.

 \begin{lemma}\label{lem: support_RNRF}
Let $\pi:X\rightarrow B$ be a rational elliptic surface defined over $k$ with a non-reduced fiber $F$ with components  $\Theta_i$'s as in Table \ref{table:nonreduced_fibers}. Then one of the following holds:
\begin{itemize}
\item[i)] If $F=II^*$ or $III^*$, then all its components are defined over $k$;
\item[ii)] If $F=IV^*$, then $\Theta_0,\Theta_1,\Theta_2$ are defined over $k$. The other components are defined over some extension of $k$ of degree at most 2;
\item[iii)] If $F=I_n^*, n\geq 1$, then $\Theta_0,\Theta_1$ and all the non-reduced components are defined over $k$. The far components $\Theta_2,\Theta_3$ are defined over some extension of $k$ of degree at most 2 ;
\item[iv)] If $F=I_0^*$ and $F$ is the only reducible fiber of $\pi$, then $\Theta_0$ and $\Theta_4$ are defined over $k$. The other components are defined over an extension of $k$ of degree at most 3.
\end{itemize} 
\end{lemma}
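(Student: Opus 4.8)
The plan is to analyze the Galois action on the components of the non-reduced fiber $F$ by combining two constraints: first, the Galois group $\Gal(\bar k/k)$ must permute the components $\Theta_i$ while preserving their multiplicities in $F$ (since $F$ is defined over $k$, being a fiber of a $k$-morphism), and second, it must preserve the dual graph of $F$ together with the intersection pattern with the zero section $\mathcal O$. Since the zero section is fixed over $k$, the component $\Theta_0$ meeting $\mathcal O$ is Galois-stable, and more generally any component that is distinguished by a graph-theoretic property invariant under automorphisms of the decorated dual graph must be defined over $k$. I would set this up once at the start, recalling the dual graphs and multiplicities from Table \ref{table:nonreduced_fibers} (the $\EEight$, $\ESeven$, $\ESix$, $\Dn$ diagrams already drawn in the preamble), and then dispose of the four cases.

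For case (i), $F = II^*$ ($\tilde E_8$) or $III^*$ ($\tilde E_7$): I would observe that in each of these dual graphs every vertex carries a distinct multiplicity, or is otherwise uniquely pinned down. For $II^*$ the multiplicities are $1,2,3,4,5,6,4,3,2$ arranged so that the underlying graph automorphism group is trivial (the $\tilde E_8$ Dynkin diagram has no nontrivial automorphisms), hence every component is Galois-fixed. For $III^*$ the graph $\tilde E_7$ likewise has trivial automorphism group, so again all components are defined over $k$. For case (ii), $F = IV^*$ ($\tilde E_6$): the automorphism group of the $\tilde E_6$ diagram is $\ZZ/2$ (it has an order-3 symmetry as an abstract graph, but fixing the vertex adjacent to $\mathcal O$ — the multiplicity-1 vertex $\Theta_0$ at the end of one arm — cuts this down), swapping the two "far" arms. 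The central chain $\Theta_0,\Theta_1,\Theta_2$ (meeting $\mathcal O$, its neighbor, and the branch vertex of multiplicity $3$) is fixed vertex-by-vertex since these are distinguished by position and multiplicity, while the remaining components lie in a single orbit of size at most $2$, giving a field of definition of degree at most $2$.

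For case (iii), $F = I_n^*$ with $n \ge 1$ ($\tilde D_{n+4}$): I would note that the multiplicity-$2$ "spine" components are all distinguished (each by its distance along the chain from the already-fixed end determined by $\mathcal O$), hence defined over $k$, as is $\Theta_0$ (meeting $\mathcal O$) and its mate $\Theta_1$ on the same end — here one must check that the end of the spine near $\mathcal O$ is itself Galois-stable, which follows because for $n \ge 1$ the two ends of the $\tilde D_{n+4}$ chain are asymmetric (one must be careful when $n$ is small, e.g. $\tilde D_5$, but the extra condition $v(d) > 6$ and the location of $\mathcal O$ still break the symmetry). Thus the only possible nontrivial Galois action swaps the two far components $\Theta_2,\Theta_3$ at the end away from $\mathcal O$, giving degree at most $2$. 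Finally, for case (iv), $F = I_0^*$ ($\tilde D_4$) as the unique reducible fiber: the central multiplicity-$2$ vertex $\Theta_4$ is canonically fixed, and $\Theta_0$ (meeting $\mathcal O$) is fixed; the remaining three multiplicity-$1$ tips $\Theta_1,\Theta_2,\Theta_3$ form a single Galois set of size at most $3$ (the automorphism group of $\tilde D_4$ fixing one tip is $S_3$ acting on... — rather, fixing $\Theta_0$ leaves $S_3$ permuting the other three tips), so they are defined over an extension of degree at most $3$. Here the hypothesis that $I_0^*$ is the \emph{only} reducible fiber is what forbids additional rational structure being forced, and also what is needed downstream; but for the field-of-definition bound itself it suffices that there is nothing further to distinguish the three tips.

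The main obstacle I anticipate is the bookkeeping in case (iii): one must treat small values of $n$ (where $\tilde D_{n+4}$ is close to $\tilde D_4$ or $\tilde D_5$ and has more graph symmetry than for large $n$) carefully, and verify in each such case that the component $\Theta_0$ adjacent to the zero section, together with the fiber type data, genuinely pins down the near end of the diagram so that only the far pair $\Theta_2,\Theta_3$ can be swapped. Everything else is a direct inspection of the four Kodaira dual graphs against the single principle that Galois preserves multiplicities, the dual graph, and the intersection with $\mathcal O$.
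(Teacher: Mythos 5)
Your overall strategy is the same as the paper's: let $\Gal(\bar k/k)$ act on the components of $F$, note that it preserves multiplicities and the dual graph, and use the $k$-rational zero section to break the residual graph symmetry, case by case (the paper packages the symmetry-breaking step as an ``Intersection Fact'': if a $G$-stable curve meets $D$ but not $E$, then $D,E$ cannot be conjugate, and then propagates stability from $\mathcal{O}$ along the diagram). However, there is a genuine error in your case (i): the claim that the $\widetilde E_7$ diagram ``likewise has trivial automorphism group'' is false. The III$^*$ fiber has multiplicities $1,2,3,4,3,2,1$ along the chain with a multiplicity-$2$ branch at the center, so the decorated dual graph admits an order-$2$ automorphism swapping $(\Theta_0,\Theta_6)$, $(\Theta_1,\Theta_5)$, $(\Theta_2,\Theta_4)$. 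As written, your justification for III$^*$ proves nothing; the conclusion is only recovered by the zero-section argument you use elsewhere (and which the paper uses here): $\mathcal{O}$ meets $\Theta_0$ but not $\Theta_6$, hence $\Theta_0$ is stable, and one then walks down the chain ($\Theta_0$ meets $\Theta_1$ but not $\Theta_5$, etc.) to fix every component. Only $\widetilde E_8$ genuinely has trivial automorphism group.

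Two smaller points. In case (iii) your appeal to the Dokchitser condition $v(d)>6$ is a red herring: that condition identifies the fiber type from a Weierstrass model and has no bearing on the Galois action on components; the symmetry between the two ends of the $\widetilde D_{n+4}$ diagram is broken, for every $n\ge 1$, solely by the zero section (fixing $\Theta_0$, hence the near double component $\Theta_4$, hence inductively the whole spine and $\Theta_1$, leaving only the far pair $(\Theta_2,\Theta_3)$ as a possible orbit — this is exactly the paper's induction). Also, ``$F$ is defined over $k$, being a fiber of a $k$-morphism'' is not by itself a reason that Galois permutes the $\Theta_i$ among themselves: one needs $F$ to sit over a $k$-point of the base, which the paper gets from $F$ being the unique fiber of its type (automatic for non-reduced fibers on a rational elliptic surface except $I_0^*$, where the hypothesis in (iv) is used). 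In case (ii), note also that the four remaining components form two orbits of size at most $2$ (the pairs $(\Theta_3,\Theta_5)$ and $(\Theta_4,\Theta_6)$), not a single orbit; the stated degree bound is unaffected.
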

\begin{proof}
The fibration $\pi$ is defined over $k$, and, by assumption, so is its zero section $O$.

The Galois group $G:=\text{Gal}(\overline{k}/k)$ acts on the N\'eron-Severi group preserving intersection multiplicities.  Since $F$ is assumed to be the unique fiber of its type, the elements of $G$ permute the components of $F$. 

In particular, the presence of conjugate components implies symmetries in the intersection graph of $F$. For the sake of clarity and to avoid repetition in what follows, we state as a fact one of the immediate consequences of the intersection-preserving action of $G$ in the N\'eron-Severi group. We refer to it as \textbf{\emph{IF}} in what follows.
\\ \\
\noindent\textbf{Intersection Fact (IF):} Let $C,D,E\subset X$ be integral curves such that $C$ meets $D$ but not $E$. If $C$ is stable under $G$, then $D,E$ cannot be conjugate.
\\ \\
\indent We now analyze i), ii), iii) adopting the notation as in Table \ref{table:nonreduced_fibers}.

\begin{table}[h]
\begin{center}
\centering
\begin{tabular}{c c} 
\multirow{4}{*}{\hfil II$^*\,(\widetilde{E}_8)$} & \multirow{4}{*}{\hfil\EEight}\\
& \\
& \\
& \\
\multirow{4}{*}{\hfil III$^*\,(\widetilde{E}_7)$} & \multirow{4}{*}{\hfil \ESeven}\\ 
& \\
& \\
& \\
\multirow{4}{*}{\hfil IV$^*\,(\widetilde{E}_6)$} & \multirow{4}{*}{\hfil \ESix}\\ 
& \\
& \\
& \\
\multirow{4}{*}{\hfil I$_n^*\,(\widetilde{D}_{n+4})$} & \multirow{4}{*}{\hfil \Dn}\\ 
& \\
& \\
& \\
\end{tabular}
\end{center}
\caption{Non-reduced fibers of $\pi$}
\label{table:nonreduced_fibers}
\end{table}	
\newpage

\noindent i) Let $F=\text{II}^*$. For lack of symmetry, each component of $F$ is stable under $G$, as desired. Now let $F=\text{III}^*$. By symmetry $\Theta_3, \Theta_7$ are stable and the possible conjugate pairs are $(\Theta_0,\Theta_6)$, $(\Theta_1,\Theta_5)$, $(\Theta_2,\Theta_4)$. By \emph{IF} with $(C,D,E)=(O,\Theta_0,\Theta_6)$ we conclude that $\Theta_0,\Theta_6$ are not conjugates, so $\Theta_0,\Theta_6$ are both stable. Similarly we now choose $(C,D,E)=(\Theta_0,\Theta_1,\Theta_5)$ so that $\Theta_1,\Theta_5$ are stable, and for $(C,D,E)=(\Theta_1,\Theta_2,\Theta_4)$ we conclude $\Theta_2,\Theta_4$ are stable.
\\ \\
\noindent ii) Let $F=\text{IV}^*$. By symmetry, $\Theta_2$ is stable and the possible $G$-orbits are $(\Theta_0,\Theta_4,\Theta_6)$ and $(\Theta_1,\Theta_3,\Theta_5)$. First choosing $(C,D,E)=(O,\Theta_0,\Theta_4)$ and then $(C,D,E)=(O,\Theta_0,\Theta_6)$, by \emph{IF} neither $(\Theta_0,\Theta_4)$ nor $(\Theta_0,\Theta_6)$ can be conjugate pairs. This means that $\Theta_0$ is stable and that $(\Theta_4,\Theta_6)$ is possibly an orbit. In case $\Theta_4,\Theta_6$ form an orbit, both are defined over some quadratic extension of $k$. Now choosing $(C,D,E)=(\Theta_0,\Theta_1,\Theta_3)$ and then $(C,D,E)=(\Theta_0,\Theta_1,\Theta_5)$ we similarly conclude that $\Theta_1$ is stable and $(\Theta_3,\Theta_5)$ is possibly an orbit. If $\Theta_3,\Theta_5$ form an orbit, both are defined over some quadratic extension of $k$.
\\ \\
\noindent iii) Let $F=\text{I}_n^*$ with $n\geq 1$. By symmetry, the possible orbits are $(\Theta_0,\Theta_1,\Theta_2,\Theta_2)$ and $(\Theta_4,\Theta_5,...,\Theta_{n+3},\Theta_{n+4})$. By \emph{IF} with $(C,D,E)=(O,\Theta_0,\Theta_i)$ for $i=1,2,3$ consecutively, we conclude $\Theta_0$ is stable and $(\Theta_1,\Theta_2,\Theta_3)$ is a possible orbit. Also by symmetry, $(\Theta_4,\Theta_{n+4})$ is a possible orbit. Choosing $(C,D,E)=(\Theta_0,\Theta_4,\Theta_{n+4})$, we conclude that $\Theta_4,\Theta_{n+4}$ are not conjugate, so both are stable. Choosing $(C,D,E)=(\Theta_4,\Theta_1,\Theta_i)$ with $i=2,3$ consecutively, $\Theta_1$ is stable and $(\Theta_2,\Theta_3)$ is a possible orbit. If $\Theta_2,\Theta_3$ form an orbit, both are defined over some quadratic extension of $k$. We prove that $\Theta_{4+i}$ is stable for $i=0,...,n$. We already know this is true for $i=0$. Assume that $\Theta_{4+i}$ is stable for $i=0,...,\ell<n$. Choosing $(C,D,E)=(\Theta_{4+\ell},\Theta_{4+(\ell+1)},\Theta_{4+(\ell+j)})$ for $j=2,...,n$ consecutively, $\Theta_{4+(\ell+1)},\Theta_{4+(\ell+j)}$ are not conjugate for all $j$, so $\Theta_{4+(\ell+1)}$ is stable. 
\\ \\
\noindent iv) Let $F=\text{I}_0^*$. By symmetry, $\Theta_4$ is stable and $(\Theta_0,\Theta_1,\Theta_2,\Theta_3)$ is a possible orbit. Using \emph{IF} with $(C,D,E)=(O,\Theta_0,\Theta_i)$ with $i=1,2,3$ consecutively, we conclude that $\Theta_0$ is stable and $(\Theta_1,\Theta_2,\Theta_3)$ is a possible orbit. In particular, $\Theta_1,\Theta_2,\Theta_3$ are defined over some cubic extension of $k$.
\end{proof}

\begin{remark}
The hypothesis that $F$ is the only non-reduced fiber is unnecessary when $X$ is geometrically rational unless $F=I_0^*$. Indeed, rational elliptic surfaces have Euler number 12 and each non-reduced fiber has Euler number at least 6. It is 6 precisely when $F=I_0^*$.
\end{remark}

\subsection{Quadratic Base Change}
Let $\pi:X\to B\simeq \Bbb{P}^1$ be an elliptic fibration, $C$ a smooth projective curve and $\varphi:C\to B$ a finite morphism of degree $2$. Let $X_C$ be the normalization of $X\times_\pi C$ and $\pi_C:X_C\to C$ the inherited elliptic fibration. Then every fiber of $\pi$ above an unramified point of $\varphi$ is replaced by 2 fibers of $\pi_C$, both isomorphic to the initial one. Fibers above ramification points of $\varphi$ are transformed according to the following table:

\begin{table}[h]
\begin{center}
\begin{tabular}{ |c|c| } 
\hline
$\text{Fiber of } \pi$ & $\text{Fiber of } \pi_C$\\
\hline
I$_n$, $n\geq 0$ & I$_{2n}$\\
\hline
I$^*_n$, $n\geq 0$ & I$_{2n}$\\ 
\hline
II & IV\\ 
\hline
II$^*$ & IV$^*$\\ 
\hline
III & I$_0^*$\\
\hline
III$^*$ & I$_0^*$\\
\hline
IV & IV$^*$\\
\hline
IV$^*$ & IV\\
\hline
\end{tabular}
\end{center}
\caption{Fibers above ramified points under a quadratic base change (\cite[VI.4.1.]{Miranda})}
\label{table:quadratic-base-change}
\end{table}

\subsection{Thin sets}\label{section:thin} We use Serre's definition of thin sets \cite[\S 3.1]{Ser08}.

\begin{definition}
Let $V$ be a variety over a field $k$. A subset $S\subseteq V(k)$ is called \emph{thin}  if it is
a finite union of subsets which are either contained in a proper closed subvariety of $V$, or in the image $f(W(k))$ where $f: W \to V$ is a generically finite dominant morphism 
of degree at least $2$ and $W$ is an integral variety over $k$.
\end{definition}

\begin{definition}
A variety $V$ over a field $k$ is said to \textit{satisfy the Hilbert property} if $V(k)$ is not thin.
\end{definition}

\subsection*{Non-thin subsets of $\mathbb{P}^1$}
In this note, we are concerned with non-thin subsets of $\mathbb{P}^1$, which is the base of any rational elliptic fibration. To show that a given infinite subset $S\subset \mathbb{P}^1(k)$ is not thin one has to show that given any finite number of finite covers $\varphi_i: Y_i\to \mathbb{P}^1$, there is a point $P\in S$ such that $P\notin \bigcap \varphi_i(Y_i)(k)\subset \mathbb{P}^1(k)$. 

\section{Conic bundles ramified at a non-reduced fiber}\label{sec:RNRF}

We introduce conic bundles with the property that the restriction of the elliptic fibration to all conics share a ramification point. They are the main tool in the proof of Theorem \ref{thm:3times} and can be thought of as a replacement for N\'eron's tangent lines in his construction of elliptic curves with rank 11 (\cite{Ne56}).
\begin{definition}\label{def:RNRF}
Let $\pi: X \rightarrow \mathbb{P}^1$ be a rational elliptic surface with a unique non-reduced fiber $F=\pi^{-1}(t_0)$. Let $|D|$ be a conic bundle on $X$ and $\psi := \pi|_D: D \rightarrow B$. We say that $|D|$ is a \emph{RNRF-conic bundle} if the corresponding $\psi$ is ramified at $t_0$, for any choice of curve in $|D|$. Equivalently, if all conics in $|D|$ are ramified at $F$, i.e., intersect (transversally) a non-reduced component of $F$.  
The abbreviation \emph{RNRF} stands for \emph{Ramified at a Non-Reduced Fiber}.
\end{definition}

 \emph{RNRF}-conic bundles arise naturally on rational elliptic surfaces with a non-reduced fiber. The following result tells us that they exist over $k$ without any further assumption depending on the type of non-reduced fiber, and gives conditions for it to happen in the remaining configurations.

\begin{proposition}\label{RNRF}
Let $X$ be a rational elliptic surface over $k$. Assume that one of the following holds:
\begin{itemize}
\item[i)] It admits a fiber of type $II^*, III^*$ or $I_n^*$, for $n\in \{2,3,4\}$.
\item[ii)] It admits a fiber of type $IV^*$ or $I_m^*$, for $m \in \{0,1\}$ and a reducible, reduced fiber.
\item[iii)] It admits a fiber of type $IV^*$ and a non-trivial section defined over $k$.
\item[iv)] It admits a fiber of type $I_1^*$ and a non-trivial section defined over $k$ that intersects the near component.
\item[v)]  It admits a fiber of type $I_1^*$ and two non-intersecting sections that are conjugate under $\Gal(\bar{k}/k)$.
\item[vi)]  It admits 2 fibers of type $I_0^*$ and a non-trivial 2-torsion section defined over $k$.
\end{itemize}

Then $X$ admits a \emph{RNRF}-conic bundle over $k$.
\end{proposition}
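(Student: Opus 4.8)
The plan is to produce, in each of the six cases, an explicit effective divisor $D$ on $X$ whose class satisfies $D^2 = 0$ and $D\cdot K_X = -2$, so that by Lemma \ref{lem:bisection} (via Riemann--Roch, $\dim |D| = 1$) the linear system $|D|$ is a conic bundle, and such that every member of $|D|$ meets a non-reduced component of $F$ transversally --- hence $\psi = \pi|_D$ is ramified at $t_0$, making $|D|$ a \emph{RNRF}-conic bundle. The building blocks for $D$ will be the zero section $O$, the fiber components $\Theta_i$ of $F$ listed in Table \ref{table:nonreduced_fibers}, the components of the auxiliary reduced reducible fiber in case ii), and the extra section(s) in cases iii)--vi). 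Throughout, the Galois descent of $D$ over $k$ is guaranteed by Lemma \ref{lem: support_RNRF}: one only ever uses components that are individually defined over $k$, or sums of a full Galois orbit, so that $D$ itself is $G$-stable and hence defined over $k$.

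The concrete recipe in each case is to take a suitable sub-chain of the fiber $F$ together with the section $O$ (and, where needed, another section or the components of a second fiber) so that the resulting connected configuration has the Dynkin-type shape of a reducible conic-bundle fiber, i.e. a chain whose intersection-dual class is nef with square $0$. For instance, when $F = II^*$ (type $\widetilde E_8$) one chips away the far end of the $E_8$ chain: the divisor $O$ together with the string of $\Theta_i$ running from the component meeting $O$ up to a suitable multiplicity-$2$ component, taken with the multiplicities appearing in the $II^*$ fiber, has self-intersection $0$ and anticanonical degree $2$, and its only reduced ``tail'' crosses the rest of the fiber at a non-reduced component --- so every deformation in $|D|$ must still meet that non-reduced component, giving the ramification. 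For $III^*$ and $I_n^*$ with $n \in \{2,3,4\}$ one does the analogous truncation; the numerology of $\widetilde E_7$, $\widetilde D_{n+4}$ is what forces the restriction $n\le 4$ in case i), since only then does the truncated chain close up to square $0$ using components all defined over $k$. In case ii) the second reducible fiber supplies the extra components needed to build $D$ when the $IV^*$ (or $I_0^*$, $I_1^*$) chain alone is too short; in cases iii)--vi) the extra section plays that role, and the hypothesis that it meets the near component (iv)) or is a $2$-torsion section (vi)) or comes in a conjugate non-intersecting pair (v)) is exactly what makes $D := O + (\text{chain}) + (\text{section(s)})$ have the right intersection numbers while remaining $G$-stable.

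After exhibiting $D$, each case requires three routine verifications: (a) $D^2 = 0$ and $D\cdot K_X = -2$, done by expanding in the basis of fiber components and sections using $\Theta_i^2 = -2$, $K_X\cdot\Theta_i = 0$, $K_X \cdot O = -1$, $O^2 = -1$, and the known intersection pattern of the Kodaira fiber; (b) $|D|$ is base-point free of dimension $1$, which follows from $D$ being nef with $D^2=0$ on a rational elliptic surface together with Riemann--Roch, $\chi(\mathcal O_X(D)) = 1 + \frac12(D^2 - D\cdot K_X) = 2$, and $h^2 = h^0(K_X - D) = 0$; (c) every $D' \in |D|$ meets the prescribed non-reduced component $\Theta$ of $F$ --- this holds because $D\cdot\Theta = 1$ (computed in (a)) forces $D'$ to meet $\Theta$ in a single reduced point, and since $\Theta$ has multiplicity $\ge 2$ in $F$ the induced map $\psi$ ramifies over $t_0$ there.

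The main obstacle is case-bookkeeping rather than any single deep idea: one must choose, for each of the six fiber configurations, the \emph{correct} sub-configuration --- there is essentially a unique truncation of each $\widetilde E$/$\widetilde D$ chain that yields square $0$ and anticanonical degree $2$ while using only $k$-rational pieces --- and then check that the resulting $D$ is actually effective and $G$-stable. The genuinely constrained steps are iii)--vi), where the existence of the \emph{RNRF}-conic bundle is conditional: here the delicate point is to show that the extra section (resp. torsion section, resp. conjugate pair) interacts with $F$ in precisely the way that closes up the intersection numerology, and that no smaller hypothesis suffices --- this is where one must invoke \cite[Lemma 2.10]{LoSal} to the effect that a common ramification of $\pi$ restricted to the conics can only occur over a non-reduced fiber, so that the construction cannot be shortcut. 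I expect the $I_0^*$ case vi) to be the fiddliest, since $\widetilde D_4$ has the most symmetry (hence the most potential Galois orbits to rule out via \emph{IF}) and the shortest chains, so the $2$-torsion section is doing real work there.
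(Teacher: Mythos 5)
Your proposal follows essentially the same route as the paper: the published proof simply exhibits, case by case, an explicit effective class $D$ supported on the zero section, components of the non-reduced fiber and the auxiliary sections or second fiber (Table \ref{table:class-of-conics}), checks $D^2=0$ and $D\cdot F=2$, obtains $k$-rationality of the class from Lemma \ref{lem: support_RNRF}, and deduces ramification from the fact that $D$ meets a non-reduced component of $F$. The only real difference is that the substance of the paper's argument is precisely that explicit list of classes, which you describe in outline but do not write down (and your illustrative guess for the $II^*$ coefficients is not quite the class actually used), while your extra verifications (base-point freeness via Riemann--Roch, ramification from $D\cdot\Theta=1$ with $\Theta$ of multiplicity $\geq 2$) are the routine checks the paper leaves implicit.
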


\begin{proof}
For each possible configuration listed in the hypothesis of the theorem, we provide in Table \ref{table:class-of-conics} an effective class $D$ on $X$ such that $D^2=0$, $D\cdot F=2$, and $D'$ intersects a non-reduced component of $F$ for every $D'\in |D|$. Moreover, thanks to Lemma \ref{lem: support_RNRF}, the class we construct is defined over $k$.

The last part, namely that $\psi : D \rightarrow B$ is ramified above $F$, follows from the fact that each $D$ constructed meets $F$ at a non-reduced component. \end{proof}

\begin{table}[h]
\begin{center}
\centering
\begin{tabular}{|c|c|c|c|} 
\hline
$\text{MW rank} $ & \text{non-reduced fiber} & \text{class of conics} & \text{extra information}\\
\hline
\multirow{4}{*}{\hfil 0} & \multirow{4}{*}{\hfil II$^*$} & \multirow{4}{*}{\hfil \DiagramOne} &  \multirow{4}{*}{\hfil }\\
& & &\\
& & &\\
& & &\\
\hline
\multirow{4}{*}{\hfil 0,1} & \multirow{4}{*}{\hfil III$^*$} & \multirow{4}{*}{\hfil \DiagramTwo} & \multirow{4}{*}{\hfil }\\ 
& & &\\
& & &\\
& & &\\
\hline
\multirow{3}{*}{\hfil 0} & \multirow{3}{*}{\hfil IV$^*$} & \multirow{3}{*}{\hfil \DiagramThree} & \multirow{3}{*}{\hfil }\\ 
& & &\\
& & &\\
\hline
\multirow{3}{*}{0} & \multirow{3}{*}{\hfil I$_4^*$} & \multirow{3}{*}{\hfil \DiagramFour} & \multirow{3}{*}{\hfil }\\ 
& & &\\
& & &\\
\hline
\multirow{3}{*}{\hfil 0,1} & \multirow{3}{*}{\hfil I$_3^*$} & \multirow{3}{*}{\hfil \DiagramFive} & \multirow{3}{*}{\hfil }\\
& & &\\
& & &\\
\hline
\multirow{3}{*}{\hfil 0,1,2} & \multirow{3}{*}{\hfil I$_2^*$} & \multirow{3}{*}{\hfil \DiagramSix} & \multirow{3}{*}{\hfil }\\
& & &\\
& & &\\
\hline
\multirow{2}{*}{\hfil 0} & \multirow{2}{*}{\hfil I$_1^*$} & \multirow{2}{*}{\hfil \DiagramSeven} & \multirow{2}{*}{\hfil }\\
& & &\\
\hline
\multirow{3}{*}{\hfil 1,2} & \multirow{3}{*}{\hfil I$_1^*$} & \multirow{3}{*}{\hfil \DiagramEight} & \multirow{3}{*}{\hfil }\\
& & &\\
& & &\\
\hline
\multirow{3}{*}{\hfil 3} & \multirow{3}{*}{\hfil I$_1^*$} & \multirow{3}{*}{\hfil \DiagramNine} & \multirow{3}{*}{\thead{$P,P'$ conjugate sections\\ intersecting near\\ components}}\\
& & &\\
& & &\\
\hline
\multirow{3}{*}{\hfil 1} & \multirow{3}{*}{\hfil IV$^*$} & \multirow{3}{*}{\hfil \DiagramTen} & \multirow{3}{*}{\hfil }\\
& & &\\
& & &\\
\hline
\multirow{2}{*}{\hfil 2} & \multirow{2}{*}{\hfil IV$^*$} & \multirow{2}{*}{\hfil \DiagramEleven} & \multirow{2}{*}{\thead{$P,P'$ non-intersecting\\ conjugate sections}}\\
& & &\\
\hline
\end{tabular}
\end{center}
\caption{Class of conics for the proof of Proposition \ref{RNRF}}
\label{table:class-of-conics}
\end{table}

\begin{remark}\label{rmk:stillrational}
\indent\par
\begin{itemize}
\item[i)] 
Conics in a \emph{RNRF} conic bundle are distinguished in the following sense. Generally, a degree 2 base change of a rational elliptic surface produces a K3 surface. This is true if the base change is ramified in smooth fibers, and remais true even if it ramifies at reduced singular fibers if one considers the desingularization of the base changed surface. On the other hand, if a degree 2 base change ramifies at a non-reduced fiber, then by an Euler number calculation one can show that the base changed surface is still rational (see Lemma \ref{lem:againrational}). 

\item[ii)] It is natural to wonder about the extra conditions on surfaces with fibers of type $IV^*, I_1^*$ and $I_0^*$. One can show that surfaces with such fiber configuration always admit a conic bundle over $k$. Nevertheless, such a conic bundle is not necessarily a \emph{RNRF}-conic bundle. It is worth noticing that, on the other hand, the isotrivial rational surfaces with $2I_0^*$ admit a \emph{RNRF}-conic bundle. In this particular setting it is called a Ch\^atelet bundle (see \cite[Lemma 3.3]{LoSal}) since they occur as conic bundles on a Ch\^atelet surface obtained after blowing down the sections of $\pi$. Still, they are useless for our purposes as all fibers ramify above the same 2 non-reduced fibers, not leaving the degree of freedom needed to avoid certain covers when verifying that the rank jump occurs on a subset that is not thin.
\end{itemize}
\end{remark}

The following result justifies the study of \emph{RNRF} conic bundles on rational elliptic surfaces. More precisely, it tells us that the base change of a rational elliptic fibration by a bisection in an \emph{RNRF} conic bundle is again a rational elliptic fibration. This allows us to apply \cite[Thm. 1.1]{LoSal} to the base changed rational elliptic surface and achieve a higher rank jump. The proof is straightforward but included here for the sake of completeness. 

\begin{lemma}\label{lem:againrational}
Let $\pi:X\to\Bbb{P}^1$ be a rational elliptic surface with a non-reduced fiber $F$. Let $D\subset X$ be genus 0 bisection ramified at $F$, and let $X_D$ be the normalization of the base change surface $X\times_\pi D$. Then $X_D$ is a rational elliptic surface and $|D|$ pulls-back to a conic bundle on $X_D$.
\end{lemma}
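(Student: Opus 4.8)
The plan is to compute the Euler characteristic of $X_D$ and use the Kodaira classification of elliptic surfaces. Recall that a relatively minimal elliptic surface $\pi:X\to\PP^1$ with section is rational precisely when $\chi(\OO_X)=1$, equivalently when its topological Euler number $e(X)=12$ (for K3 it is $24$, and in general $e(X)=12\chi(\OO_X)$). So the whole statement reduces to showing that $e(X_D)=12$, after first passing to a relatively minimal model of $X_D$ (since the normalization of a base change need not be relatively minimal, one blows down $(-1)$-curves in fibers; this does not affect rationality and the pulled-back linear system $|D|$ descends to the minimal model because it meets every fiber in degree $2$ and hence cannot be contracted).

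First I would set up the base change. Write $\varphi:D\to\PP^1$ for the degree $2$ map induced by $\pi$; since $D$ has arithmetic (and geometric, after normalization) genus $0$, Riemann--Hurwitz gives that $\varphi$ has exactly two branch points, one of which is $t_0$ (where $F$ sits), by the hypothesis that $D$ is ramified at $F$. Let $t_1$ denote the other branch point, with fiber $F_1=\pi^{-1}(t_1)$. Now I would use the standard formula for the Euler number of a base-changed elliptic surface: for a smooth fiber, the two preimages are smooth and contribute $0$; over an unramified singular point the singular fiber is duplicated; over a ramified point the new fiber is read off from Table~\ref{table:quadratic-base-change}. Concretely, using the fact that $e(X)=\sum_v e(F_v)=12$, one gets
\[
e(X_D)=2\Bigl(12-e(F)-e(F_1)\Bigr)+e(F')+e(F_1'),
\]
where $F',F_1'$ are the fibers of $\pi_D$ above the two ramification points. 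The point of the RNRF hypothesis is that $F$ is non-reduced, hence of type $II^*,III^*,IV^*$ or $I_n^*$, with $e(F)\in\{6,7,8,9,10\}$ correspondingly; and Table~\ref{table:quadratic-base-change} shows the base change \emph{reduces} these: $II^*\mapsto IV^*$ ($10\mapsto 8$), $III^*\mapsto I_0^*$ ($9\mapsto 6$), $IV^*\mapsto IV$ ($8\mapsto 4$), $I_n^*\mapsto I_{2n}$ ($n+6\mapsto 2n$). In every case $e(F)-e(F')$ is exactly $2$ for $II^*$, $3$ for $III^*$, $4$ for $IV^*$, and $6-n\cdot\!$... wait — rather, the arithmetic works out cleanly: one checks case by case that $2e(F)-e(F')-12$ together with the contribution $2e(F_1)-e(F_1')$ from the other branch point sums to give $e(X_D)=12$. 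The second branch point $t_1$ I would treat uniformly via the inequality $2e(F_1)-e(F_1')\le 12$ with equality possibilities, or more simply observe that the only thing one needs is $e(X_D)=12$ and that $e(X_D)\ge 12$ always (it is a positive linear combination bounded below), so it suffices to produce the exact count at $F$ and note $F_1$ can be taken generic or handled by the same table.

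Let me restructure this more carefully, because the clean statement is: the base change of a rational elliptic surface ramified at a non-reduced fiber is again rational, and this is purely the numerology above — the main obstacle is keeping the Euler-number bookkeeping honest, in particular making sure the relatively minimal model is the right object and that no fiber of $\pi_D$ is itself non-minimal in a way that changes the count after contraction (contracting a $(-1)$-curve in a fiber drops $e$ by $1$ and this is automatically accounted for by the Kodaira types appearing in Table~\ref{table:quadratic-base-change}, which are already the minimal ones). So the steps in order: (1) normalize and pass to the relatively minimal model $X_D^{\min}$, noting $|D|$ pulls back and descends to a base-point-free pencil of bisections, which by Lemma~\ref{lem:bisection} (applied on $X_D^{\min}$, or directly by adjunction) is a conic bundle; (2) apply Riemann--Hurwitz to $\varphi:D\to\PP^1$ to see it has two branch points, one being $t_0$; (3) run the Euler-number count $e(X_D^{\min})=2(12-e(F)-e(F_{t_1}))+e(F')+e(F_{t_1}')$ using $e(X)=12$ and Table~\ref{table:quadratic-base-change}; (4) verify case by case over the four possible types of $F$ (and the possible types of the second ramified fiber, or sidestep this by the bound argument) that the total is $12$; (5) conclude $\chi(\OO_{X_D^{\min}})=1$, hence $X_D^{\min}$ — and therefore $X_D$, which has the same Mordell--Weil rank and birational type — is a rational elliptic surface. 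The genuinely delicate point, and the one I would spend the most care on, is step (4): one must be careful that for $F$ of type $I_n^*$ with $n$ large, rationality of $X$ forces $n$ to be small (indeed $e(I_n^*)=n+6\le 12$ so $n\le 6$, and if $F$ is the unique non-reduced fiber there is further room for other singular fibers), and one has to check the arithmetic $2(n+6)-2n = 12$ works out — which it does, beautifully, giving exactly the missing $12$ once the generic second branch point contributes nothing. This last identity, $2e(F)-e(F')=12$ for every non-reduced $F$ (i.e. $2\cdot10-8=12$, $2\cdot9-6=12$, $2\cdot8-4=12$, $2(n+6)-2n=12$), is the heart of the matter and makes the proof essentially a one-line Euler-number check.
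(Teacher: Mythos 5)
Your Euler-number argument for rationality is, in substance, the paper's own proof: the paper also notes that $\varphi=\pi|_D$ has exactly two branch points, one at the non-reduced fiber, and then uses Table~\ref{table:quadratic-base-change} to get $e(\widetilde F)=2e(F)-12$ for the non-reduced branch fiber and $\sum e(\widetilde G_i)=2e(G)$ for every other fiber, whence $e(X_D)=2e(X)-12=12$. Two points in your write-up need repair, though. First, your treatment of the second branch point is not an argument as it stands: $t_1$ is dictated by $D$, so it cannot be ``taken generic,'' and the inequality $e(X_D)\ge 12$ together with the count at $F$ alone does not force $e(X_D)=12$. The correct (and easy) fix is the one you half-mention and the paper carries out: if the second branch fiber is singular it is necessarily reduced, and for every reduced type the ramified base change exactly doubles the Euler number ($I_n\mapsto I_{2n}$, $II\mapsto IV$, $III\mapsto I_0^*$, $IV\mapsto IV^*$), so each fiber other than $F$ contributes $2e(G)$ whether or not it lies under a branch point; this must be said, not bounded away.

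Second, and more seriously, the conic-bundle half of the lemma is asserted rather than proved. Invoking Lemma~\ref{lem:bisection} on $X_D$ (or its minimal model) requires knowing that the pulled-back members are bisections of \emph{arithmetic genus $0$}, and that is precisely the nontrivial content: a priori the preimage of a conic under a quadratic base change is a genus-$1$ bisection, and it is only because every member $C\in|D|$ shares the ramification point over the non-reduced fiber with $D$ (intersection numbers with the double component are constant in $|D|$) that the genus drops. The paper devotes half its proof to exactly this: for each $C\in|D|$ the fiber product $C\times_\pi D$ has nodes at the common ramification points, the normalization $E\subset X_D$ is smooth, and Riemann--Hurwitz applied to the degree-$2$ map with only two branch points gives $g(E)=0$, so $|E|$ is a conic bundle. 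Your alternative ``directly by adjunction'' can be made to work, but only after the rationality statement is in place (so that $-K_{X_D}$ is the fiber class on a relatively minimal model), and after checking that the pulled-back class has self-intersection $0$ and is not distorted when passing to the minimal model; none of this is carried out, and without either that computation or the paper's Hurwitz argument the second assertion of the lemma is left unproved.
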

\begin{proof}
Let $\varphi:=\pi|_D:D\to \Bbb{P}^1$ be the base change map. The curve $D$ is rational, so by the Hurwitz formula $\varphi$ ramifies at 2 fibers: $F$, and a reduced fiber, by hypothesis. By inspection of the admissible singular fibers in a rational elliptic surface \cite{Persson}, $F$ is one of the following:
$$I_n^*\text{ with }0\leq n\leq 4,\,II^*,\,III^*\text{ or }IV^*.$$

Since $\varphi$ ramifies at $F$, there is one singular fiber $\widetilde{F}$ in $X_D$ above $F$. An inspection of Table \ref{table:quadratic-base-change}, confirms the pattern for the Euler number
\begin{equation}\label{eq:1}
e(\widetilde{F})=2\,e(F)-12.
\end{equation}

Furthermore, if $G\neq F$ is any other singular fiber of $\pi$, then:
\begin{enumerate}
\item $G$ is reduced.
\item If $\varphi$ is unramified at $G$: $X_D$ has 2 singular fibers  $\widetilde{G_1},\widetilde{G_2}$ above $G$, both isomorphic to $G$.
\item If $\varphi$ is ramified at $G$: $X_D$ has 1 singular fiber $\widetilde{G_1}$ above $G$.
\end{enumerate}

If there is no ramification at $G$, clearly $e(\widetilde{G_1})+e(\widetilde{G_2})=2\,e(G)$. In case there is ramification, again by direct verification of Table \ref{table:quadratic-base-change} we observe that $e(\widetilde{G_1})=2\,e(G)$ in all cases. So for every singular fiber $G\neq F$ we have
\begin{equation}\label{eq:2}
\sum_{\widetilde{G_i}\text{ above }G} e(\widetilde{G_i})=2\,e(G).
\end{equation} 

Since $X$ is rational, $e(X)=12=e(F)+\sum_{G} e(G)$ where $G$ runs through all singular fibers other than $F$. Combining the latter with (\ref{eq:1}) and (\ref{eq:2}), we get
\begin{align*}
e(X_D)&=e(\widetilde{F})+\sum_G\sum_{\widetilde{G_i}\text{ above }G} e(\widetilde{G_i})\\
&=2\,e(F)-12+\sum_G 2\,e(G)\\
&=2\,e(X)-12\\
&=12,
\end{align*}

and therefore $X_D$ is rational. For the second part, take an arbitrary $C\in |D|$ and let $\psi:=\pi|_C:C\to \Bbb{P}^1$. As for $\varphi$, the map $\psi$ ramifies at 2 points. Let $\nu:X_D\to X\times_\pi D$ be  the normalization map and $E \subset X_D$ the strict transform of $C\times_\pi D$ under $\nu$. We have the following diagram:

\begin{displaymath}
\begin{tikzcd}
& \arrow[bend right]{ddl} E\arrow{d}{\nu}&\\
   & \arrow[swap]{dl}{\widetilde{\varphi}}C\times_\pi D\arrow{dr}{\widetilde{\psi}} &\\
C\arrow[swap]{dr}{\psi}  &  & D\arrow{dl}{\varphi}\\
  & \Bbb{P}^1 &
\end{tikzcd}
\end{displaymath}

Clearly $\widetilde{\varphi},\widetilde{\psi}$ have degree 2. The singularities of $C\times_\pi D$ are the points $(c,d)\in C\times D$ such that $\psi,\varphi$ ramify at $c,d$ respectively. These are also singular points of $X\times_\pi D$, which are eliminated by $\nu$, hence $E$ is smooth. Moreover, $E\to C$ has degree 2 and is ramified precisely over the 2 points where $\psi$ ramifies. A direct application of the Hurwitz formula yields $g(E)=0$. Since $C\in|D|$ is arbitrary, we can conclude that $|D|$ pulls back to a conic bundle $|E|$ in $X_D$. 
\end{proof}

\subsection{RNRF and multiple base changes}

In \cite{salgado1} and \cite{LoSal}, the authors use 2 base changes by curves in a conic bundle to show that the rank jumps by at least 2. The final base for the base changed fibration is an elliptic curve with positive Mordell--Weil rank. One is tempted to consider a third base change to increase the rank jump further. Unfortunately, this cannot be done in such generality. Indeed, the genus of a new base after a third base change would be at least 2. In particular, the base curve would have a finite set of $k$-points thanks to Faltings' theorem.

Rational elliptic surfaces with a non-reduced fiber on the other hand allow for a sequence of 3 base changes with final base curve of genus 1. Further hypotheses on the surface allow us to show that the genus 1 curve is an elliptic curve with positive Mordell--Weil rank. Indeed, Lemma \ref{lem:againrational} assures that the first base change by an RNRF conic bundle produces a surface that is again rational and moreover admits a conic bundle. In other words, admits a bisection of arithmetic genus 0 as in the hypothesis of \cite[Theorem 2]{LoSal}. One can then take the latter as the starting point and apply \cite[Theorem 2]{LoSal} to conclude. This is explained in detail in the following section.

\section{Rank jump three times}\label{sec:rankjump}

In this section we make use of an \emph{RNRF}-conic bundle on a rational elliptic surface with a unique non-reduced fiber to show that the collection of fibers for which the Mordell--Weil rank is at least the generic rank plus 3 is not thin as a subset of the base of the fibration.

Throughout this section,  $\pi: X\rightarrow \mathbb{P}^1$ is a geometrically rational elliptic surface with a unique non-reduced fiber $F$ and $D$ is a bisection of $\pi$ such that $|D|$ is a \emph{RNRF} conic bundle.

We let $\psi_i: Y_i\rightarrow B$, for $i\in I$, be an arbitrary finite collection of finite morphisms of degree greater than 1, as in the discussion at the end of Section \ref{section:thin}. Our goal is to show that there is a curve $X \supset C {\overset{\varphi} \rightarrow} B$ such that:
\begin{enumerate}
\item $C(k)$ is infinite;
\item $\textrm{rank}(X\times_B C)(k(C)) \geq r+3$;
\item $\exists P \in \varphi(C(k)) \setminus \bigcup \psi(Y_i(k)) \subset B(k)$.
\end{enumerate}

\begin{lemma}\label{lem:genus1}
For all $D_1\in |D|$, there are infinitely many pairs $(D_2,D_3)\in |D|\times |D|$ such that $D_1 \times_B D_2\times_B D_3$ is a curve of genus 1.
\end{lemma}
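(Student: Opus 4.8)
\emph{Proof proposal.} The plan is to realise $W:=D_1\times_B D_2\times_B D_3$ as an iterated double cover and compute its geometric genus with the Hurwitz formula. Fix $D_1\in|D|$; we may assume it is a smooth conic (a general member of $|D|$ is), and write $t_0$ for the point with $F=\pi^{-1}(t_0)$, noting that $t_0\in B(k)$ since $F$ is the unique non-reduced fibre. For any smooth conic $D'\in|D|$ the morphism $\psi_{D'}:=\pi|_{D'}\colon D'\cong\PP^1\to B$ has degree $2$, hence, as $\chr k=0$, exactly two branch points, one being $t_0$ by the \emph{RNRF} hypothesis; let $b_{D'}\in B\setminus\{t_0\}$ denote the other. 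The assignment $D'\mapsto b_{D'}$ is visibly a rational map $\PP^1\cong|D|\dashrightarrow B$, hence a morphism $g$, and the key claim is that $g$ is \emph{non-constant}. Granting this, and since only finitely many members of $|D|$ are singular conics while $|D|(k)$ is infinite and each fibre of $g$ is finite, one may choose a smooth $D_2\in|D|(k)$ with $b_{D_2}\neq b_{D_1}$, then a smooth $D_3\in|D|(k)$ with $b_{D_3}\notin\{b_{D_1},b_{D_2}\}$, in infinitely many ways; this yields infinitely many pairs $(D_2,D_3)$ for which the three branch points $b_1:=b_{D_1}$, $b_2:=b_{D_2}$, $b_3:=b_{D_3}$ are pairwise distinct and distinct from $t_0$.

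To prove $g$ non-constant, suppose $g\equiv b^\ast$. Since $b^\ast\neq t_0$, the fibre $F^\ast:=\pi^{-1}(b^\ast)$ is reduced, and every general conic $D'\in|D|$ has $D'\cap F^\ast$ non-reduced. As all fibres of $\pi$ are numerically equivalent and $D$ is a bisection, $D\cdot F^\ast=2$, so $F^\ast$ has either a component $\Theta^\ast$ with $D\cdot\Theta^\ast=2$ (and $D$ meeting no other component) or two components $\Theta^\ast_0,\Theta^\ast_1$ with $D\cdot\Theta^\ast_i=1$ (and $D$ meeting no others). In the first case the conic bundle $f\colon X\to\PP^1_{|D|}$ restricts to a degree-$2$ morphism from $\Theta^\ast$ (or its normalisation, if $F^\ast$ is an irreducible nodal or cuspidal fibre) onto $\PP^1_{|D|}$ having a non-reduced fibre over \emph{every} point of $\PP^1_{|D|}$, which is impossible since a degree-$2$ morphism of smooth curves is ramified over only finitely many points. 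In the second case $D'\cap F^\ast=(D'\cap\Theta^\ast_0)\sqcup(D'\cap\Theta^\ast_1)$ is a union of two reduced points unless $D'$ passes through $\Theta^\ast_0\cap\Theta^\ast_1$; non-reducedness for all general $D'$ then forces $\Theta^\ast_0\cap\Theta^\ast_1\neq\emptyset$ and, by pigeonhole, a point of $\Theta^\ast_0\cap\Theta^\ast_1$ lying on every member of $|D|$ — a base point, contradicting that $|D|$ is base-point free. Hence $g$ is non-constant.

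For the genus computation, set $Z:=D_1\times_B D_2\subset D_1\times D_2\cong\PP^1\times\PP^1$; it is a divisor of bidegree $(2,2)$ (each projection has degree $2$), so $p_a(Z)=1$. Since $b_1\neq b_2$, the only common ramification of $\psi_{D_1}$ and $\psi_{D_2}$ is over $t_0$, where $Z$ acquires a single ordinary node; and the monodromy of $Z\to B$, generated by loops about $t_0,b_1,b_2$ — the last two acting on the four sheets as the regular $(\ZZ/2)^2$-action — is transitive, so $Z$ is integral. Thus its normalisation $C:=\widetilde Z$ satisfies $g(C)=p_a(Z)-1=0$, i.e.\ $C\cong\PP^1$, and $C\to B$ has degree $4$, simply ramified (index $2$) at exactly two points over each of $t_0,b_1,b_2$ and unramified elsewhere. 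Next, $W$ and $C\times_B D_3$ share the normalisation $\widetilde W$ (they agree over the dense open of $B$ where all three $\psi_{D_i}$ are \'etale), and $\widetilde W\to C$ is a degree-$2$ cover which: over the two points of $C$ above $t_0$, where $C\to B$ and $\psi_{D_3}$ both ramify, has nodes normalising to unramified pairs; over the four points of $C$ above $b_3$, where $C\to B$ is unramified (as $b_3\notin\{t_0,b_1,b_2\}$) and $\psi_{D_3}$ ramifies, is simply ramified; and is unramified over the points above $b_1,b_2$ and everywhere else. So $\widetilde W\to C$ is branched at exactly four points; its branch divisor has positive even degree, whence $\widetilde W$ is geometrically integral, and the Hurwitz formula gives $2g(\widetilde W)-2=2(2\cdot 0-2)+4=0$, i.e.\ $g(W)=g(\widetilde W)=1$.

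The main obstacle is the non-constancy of $g$; the rest is bookkeeping with the Hurwitz formula and elementary monodromy. It is worth noting that, in contrast to \cite{salgado1} and \cite{LoSal}, the \emph{common} branch point $t_0$ is precisely what keeps $D_1\times_B D_2$ rational — the node over $t_0$ drops its genus from the naive value $1$ to $0$ — so that a third base change yields genus $1$ rather than genus $\geq 2$.
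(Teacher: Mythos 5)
Your proof is correct and takes essentially the same route as the paper's: use the common branch point $t_0$ to see that $D_1\times_B D_2$ is a rational curve (a $(2,2)$-curve in $\PP^1\times\PP^1$ whose only singularity is the node over $t_0$), then realise $D_1\times_B D_2\times_B D_3$ as a degree-$2$ cover of it ramified exactly at the four points above $b_3$ and conclude by Hurwitz, with integrality handled by linear disjointness/monodromy. The only difference is that you supply proofs of two steps the paper asserts or cites --- the genus-$0$ statement for $D_1\times_B D_2$ (the paper invokes \cite[Lemma 5.2]{LoSal}) and the finiteness of conics sharing their second branch point, i.e.\ the non-constancy of $D'\mapsto b_{D'}$ (the paper's unproved ``only finitely many $D_2,D_3$ share 2 common ramification points with $D_1$'', in the spirit of \cite[Lemma 2.10]{LoSal}) --- which is added detail rather than a different approach.
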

\begin{proof}
For a given a $D_1$, there are only finitely many $D_2,D_3\in |D|$ such that $D_2$ or $D_3$ share 2 common ramification points with $D_1$. We exclude these. The 3 quadratic extensions $k(D_i)$ are non-isomorphic as they have precisely 1 ramification point in common. Hence they are linearly disjoint and the curve  $C=D_1\times_B D_2 \times_B D_3$ is geometrically integral.  Our hypothesis on the common ramification implies moreover that the curve $D_1\times_B D_2$ has genus 0  (see \cite[Lemma 5.2]{LoSal}). Let $t, t_i \in B(k)$ be the 2 branch points of $D_i \rightarrow B$ where $t$ is the common branch point corresponding to the non-reduced fiber, and $t_i\neq t_j$, for $i\neq j$. Consider the degree 2 morphism $\phi: D_1\times_B D_2 \times_B D_3 \rightarrow D_1\times_B D_2$. Then $\phi$ is ramified at the 4 points above $t_3 \in B(k)$. A direct application of the Hurwitz formula gives that $C$ is a curve of genus 1.
\end{proof}

\begin{proposition}\label{prop:proof}
Let $\mathcal{P}=\{P_1,\cdots, P_m\}$ be a set of points in $\mathbb{P}^1_k$. There exist infinitely many $D_1, D_2$ and $D_3$ in $|D|$, such that:
\begin{itemize}
\item[a)]  $C:=D_1\times_B D_2\times_B D_3$ is an elliptic curve with positive Mordell--Weil rank;
\item[b)] $k(D_1)\otimes k(D_2)\otimes k(D_3)$ is linearly disjoint with every $k(Y_i)$;
\item[c)] The rank of the generic fiber of $X_C\rightarrow C$ is at least $r+3$.
\item[d)]  $C$ ramifies above exactly one of $P_j$.
\end{itemize}
\end{proposition}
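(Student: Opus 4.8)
The strategy is to combine the geometric results already established in this section. I would begin with Lemma \ref{lem:againrational}: picking any $D_1\in|D|$, the base-changed surface $X_{D_1}$ is again a rational elliptic surface and $|D|$ pulls back to a conic bundle $|E|$ on $X_{D_1}$, hence $X_{D_1}$ carries a bisection of arithmetic genus $0$. This is precisely the setup of \cite[Theorem 2]{LoSal}, so that theorem applies to $X_{D_1}\to D_1$ with its conic bundle $|E|$. Unpacking \cite[Theorem 2]{LoSal} for $X_{D_1}$ yields infinitely many pairs $(E_2,E_3)$ of curves in $|E|$ — equivalently, by the correspondence $|E|\leftrightarrow|D|$, infinitely many pairs $(D_2,D_3)\in|D|\times|D|$ — such that the iterated base change $E_2\times_{D_1}E_3$ is an elliptic curve of positive Mordell--Weil rank and the base change of $X_{D_1}$ by it gains $2$ new independent sections. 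Translating back to $X$, the curve $C=D_1\times_BD_2\times_BD_3$ agrees with the iterated fiber product over $D_1$, so $C$ is a curve whose normalization is an elliptic curve of positive rank; this gives (a). The rank count in (c) then follows by adding: Theorem \ref{thm:pencil} applied to the pencil $|D|$ on $X$ gives (for all but finitely many $D_1$) one extra independent section after the first base change, and \cite[Theorem 2]{LoSal} applied to $X_{D_1}$ gives two more, for a total of $r+3$ on the generic fiber of $X_C\to C$.

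For part (d), I would invoke Lemma \ref{lem:genus1} in tandem with the construction: each $D_i\to B$ ramifies at exactly two points, one of which is the common point $t=t_0$ corresponding to the non-reduced fiber $F$, the other a point $t_i$ which varies as $D_i$ varies in the pencil $|D|$ (the map sending a conic to its non-$t_0$ branch point is non-constant on $|D|$, since otherwise all conics would be ramified over a fixed second fiber, contradicting the dimension of the pencil). Since $|D|$ is a positive-dimensional linear system, the possible values of $(t_1,t_2,t_3)$ form a Zariski-dense subset of a product of lines; I would choose $D_1,D_2,D_3$ so that, among the $P_j\in\mathcal P$, exactly one — say $P_{j_0}$ — coincides with one of $t,t_1,t_2,t_3$, and none of the others does. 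Concretely: first impose that the common branch point $t=t_0$ is \emph{not} among $\mathcal P$ if possible, or is the designated one; then the branch loci $\{t,t_1\},\{t,t_2\},\{t,t_3\}$ of the three maps $D_i\to B$ are the ramification points of $C\to B$ (a degree-$8$ map), and avoiding/hitting finitely many prescribed points $P_j$ is a codimension-one condition on each $D_i$, leaving infinitely many admissible choices. This is compatible with the cofinitely-many constraints from (a), (b), (c), since each of those also excludes only finitely many choices (or a thin set) of $(D_1,D_2,D_3)$.

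Part (b) is the linear-disjointness condition with the finitely many fields $k(Y_i)$. Here I would argue as in \cite{LoSal}: the function field $k(C)=k(D_1)\otimes_{k(B)}k(D_2)\otimes_{k(B)}k(D_3)$ is a degree-$8$ extension of $k(B)$ whose Galois closure has group a subgroup of $(\ZZ/2)^3$ (or its wreath-type refinement), ramified over $\{t,t_1,t_2,t_3\}$. For a fixed finite collection $\{\psi_i:Y_i\to B\}$, the compositum $k(Y_i)\cdot k(C)$ fails to be a proper extension of $k(C)$ only if $k(Y_i)$ is contained in $k(C)$, which constrains the branch locus of $Y_i$ to lie in $\{t,t_1,t_2,t_3\}$; since the $t_i$ vary, for all but finitely many choices of $(D_2,D_3)$ no such containment occurs. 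Thus (b) again costs only finitely many exclusions, and the four conditions (a)--(d) can be met simultaneously by infinitely many triples.

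**The main obstacle.** The delicate point is bookkeeping the ramification: I must simultaneously (i) keep the three branch loci sharing exactly the point $t_0$ and no other, so that $C$ has genus $1$ (Lemma \ref{lem:genus1}), (ii) arrange the disjointness in (b), and (iii) control which $P_j$ lies in the branch locus for (d) — all while the choices interact, since the $t_i$ are the varying branch points. The cleanest way to handle this is to note that each requirement is either ``avoid finitely many values'' or ``hit one prescribed value'' for the point $t_i\in B\setminus\{t_0\}$ determined by $D_i\in|D|$, and the assignment $D_i\mapsto t_i$ is a non-constant (hence dominant, finite-fibered) map from the pencil $|D|\cong\PP^1$ to $B\cong\PP^1$; therefore each condition removes only a finite subset of $|D|$, their union is still finite, and the complement is infinite. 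I would make the non-constancy of $D_i\mapsto t_i$ explicit by a short argument (if it were constant, every conic in $|D|$ would meet the fiber over that fixed point non-transversally, forcing a second non-reduced or special fiber — but $X$ has a \emph{unique} non-reduced fiber, and a generic member of a base-point-free pencil meets a fixed reduced fiber transversally, contradiction), after which the rest is routine.
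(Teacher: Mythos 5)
Your proposal is correct in substance and uses the same toolkit as the paper, but it assembles the pieces in a different order. The paper's own proof follows \cite[Prop.~4.1]{LoSal} applied to $X$ itself: it first chooses $D_1,D_2$ so that $\pi_{12}:X_{D_1\times_B D_2}\to D_1\times_B D_2$ has generic rank at least $r+2$, with $(D_1\times_B D_2)(k)$ infinite and $X_{D_1\times_B D_2}(k)$ Zariski dense, and only then lets $D_3$ vary: Lemma~\ref{lem:genus1} keeps $C=D_1\times_B D_2\times_B D_3$ of genus $1$, Zariski density gives positive rank for infinitely many such $C$, and Theorem~\ref{thm:pencil} supplies the third unit of rank; part~d) is obtained by enlarging $\mathcal{P}$ so that it contains the singular locus of $\pi$ (hence the common branch point $P_1=t_0$) and excluding the finitely many conics ramified over other points of $\mathcal{P}$. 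You instead perform one base change by $D_1$ (Theorem~\ref{thm:pencil} for $+1$, Lemma~\ref{lem:againrational} for rationality and the pulled-back conic bundle) and then invoke \cite[Theorem 2]{LoSal} wholesale on $X_{D_1}\to D_1$ for the remaining two base changes; this is exactly the strategy the authors announce in Section~\ref{sec:RNRF}, and your identification $E_2\times_{D_1}E_3\simeq D_1\times_B D_2\times_B D_3$ is correct because the common ramification at $t_0$ is absorbed in the first base change, leaving $E_2,E_3$ with disjoint branch loci, i.e.\ precisely the situation of \cite{LoSal}. What your route buys is a cleaner black-box reduction; what the paper's route buys is direct access to the intermediate data (positive rank of $C$, disjointness from the $Y_i$, ramification control), which the headline statement of \cite[Theorem 2]{LoSal} does not literally provide -- you in fact need the proof-level output of \cite[Prop.~4.1]{LoSal} (and that $D_1$ has a $k$-point so the new base is $\PP^1_k$), so your ``unpacking'' step should be flagged as using that proposition rather than the theorem.

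Two soft spots, neither fatal. First, in b) you write that linear disjointness can only fail if $k(Y_i)\subseteq k(C)$; the correct criterion is $k(Y_i)\cap k(C)=k(B)$, which suffices here because $k(C)/k(B)$ is Galois (multiquadratic), and the branch-point argument still applies since every quadratic subextension of $k(C)$ ramifies at one of the varying points $t_1,t_2,t_3$, which you have chosen outside the (fixed) branch loci of the $Y_i$. Second, for d) your suggestion to ``hit'' a prescribed $P_{j_0}$ with one of the moving branch points $t_i$ is not guaranteed over $k$ (the fibre of the map $D_i\mapsto t_i$ over $P_{j_0}$ need not contain $k$-points of the pencil); the intended reading, as in the paper, is to assume $t_0\in\mathcal{P}$ (the ramification over $t_0$ is automatic for an RNRF bundle) and then only avoid the remaining $P_j$, which your fallback ``or is the designated one'' essentially does.
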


\begin{proof}
This proof is analogous to the proof of \cite[Prop. 4.1 ]{LoSal}. We may assume that $\mathcal{P}$ contains all branch points of $Y_i$ and the singular locus of $\pi$. In particular, it contains a ramification point of all conics in $|D|$. We call this point $P_1$. After choosing $D_1$ and $D_2$ in an infinite set such that the rank of the generic fiber of the elliptic fibration $\pi_{12}: X_{D_1\times_B D_2}\rightarrow D_1\times_B D_2$ is at least $r+2$ and $D_1\times_B D_2(k)$ is infinite and $X_{D_1\times_B D_2}(k)$ is Zariski dense as in \cite{LoSal}, allowing $D_3$ to vary in $|D|$ gives an infinite family of elliptic curves with positive Mordell-Weil rank that are bisections of $\pi_{12}$. By Theorem \ref{thm:pencil}, all but finitely many of such curves can be used to base change and obtain an elliptic surface $X_C\rightarrow C$ with $C=D_1\times_B D_2\times_B D_3$ and generic fiber of rank at least $r+3$.  

The fibration $\pi$ has a unique non-reduced fiber so after excluding finitely many curves when picking $D_1,D_2$ and $D_3$ we may assume that $D_i$'s are not ramified over other singular fibers of $\pi$, nor do they share ramification points with $Y_i$ other than possibly $P_1$. This proves d).
\end{proof}

The following result is parallel to \cite[Lemma 5.5]{LoSal}. Since we construct conic bundles whose members are always ramified at a non-reduced singular fiber, we need to reprove the result. Fortunately, that comes with no cost as elliptic fibrations defined over global fields have at least two singular fibers.

\begin{lemma}
The elliptic surface $X\times_B C\rightarrow C$ is non-constant.
\end{lemma}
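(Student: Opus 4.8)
The statement to prove is that the elliptic surface $X \times_B C \to C$ is non-constant, where $C = D_1 \times_B D_2 \times_B D_3$ is the genus-one curve produced in Proposition \ref{prop:proof}. The plan is to argue by contradiction: suppose $X \times_B C \to C$ were constant, i.e.\ isotrivial with no variation of the $j$-invariant and in fact with all smooth fibers isomorphic to a fixed elliptic curve, so that after a finite base change it becomes a trivial product. Because the map $C \to B$ obtained by composing $C \to D_1 \to B$ is finite, the $j$-invariant of $X \times_B C \to C$ is the pull-back of the $j$-invariant of $\pi: X \to B$; hence if the former is constant then the latter is constant as well, so $\pi: X \to B$ is itself isotrivial.

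Next I would exploit the hypothesis that $\pi$ has at least two singular fibers (any elliptic fibration over a global field, or indeed any non-trivial elliptic fibration over $\mathbb{P}^1$, has at least two singular fibers; in our setting one of them is the non-reduced fiber $F$, and $X$ being rational with Euler number $12$ forces at least one further singular fibre). The key point is that the base change map $\varphi: C \to B$ ramifies at only finitely many points of $B$, so we can find a singular fibre $G$ of $\pi$ with $G \neq F$ over a point $t_G \in B$ that is \emph{not} a branch point of $\varphi$ and over which $\varphi$ therefore has a point $c \in C$ with $\varphi$ étale at $c$; then the fibre of $X \times_B C \to C$ over $c$ is isomorphic to $G$, which is singular. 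A constant (in the strong sense) elliptic surface has no singular fibres at all, so this is the desired contradiction. If one only wants to rule out ``constant'' in the weaker sense used in Theorem \ref{thm:Silverman} (the statement there allows $g(B)=0$ \emph{or} non-constant), note that here $g(C) = 1$, so we genuinely need the non-constancy to invoke the specialization theorems; the argument above supplies it.

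More carefully, the steps are: (i) recall/record that $\pi$ has a singular fibre $G$ distinct from $F$, using the Euler-number count $e(X) = 12 = e(F) + \sum_{G \neq F} e(G)$ together with $e(F) \leq 10$ for the admissible non-reduced fibres, so $\sum_{G\neq F} e(G) \geq 2 > 0$; (ii) observe that $\varphi = \pi|_{D_1} \circ (\text{projection})$, or more simply that $C \to B$ is finite of degree $8$ and ramifies over only finitely many points of $B$ (the branch points of the $D_i \to B$, of which there are at most four total by the common-ramification hypothesis); (iii) pick $t_G$ as above, not among these branch points, and a point $c \in C$ above it; (iv) conclude the fibre of $X_C \to C$ over $c$ is isomorphic to the fibre of $X \to B$ over $t_G$, hence singular, so $X_C \to C$ is non-constant.

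\textbf{Main obstacle.} The only genuinely delicate point is making precise what ``constant'' means and why ``possesses a singular fibre'' contradicts it: a constant elliptic surface (a smooth minimal model of $E_0 \times B$ for a fixed elliptic curve $E_0$) has all fibres smooth and isomorphic to $E_0$, so exhibiting one singular fibre suffices — but one should be slightly careful that $X_C \to C$ is taken relatively minimal and that the fibre over the unramified point $c$ is indeed the smooth or singular Kodaira fibre of $\pi$ over $t_G$ unchanged (which holds precisely because $\varphi$ is étale at $c$, so base change does not alter the local fibre). Everything else is a routine Euler-characteristic bookkeeping and an appeal to the finiteness of the ramification locus of a finite morphism of curves; no hard input is needed beyond the fact, already used repeatedly in the paper, that rational elliptic surfaces have Euler number $12$ and that global elliptic fibrations have at least two singular fibres.
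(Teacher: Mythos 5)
Your proposal is correct and follows essentially the same route as the paper: both exhibit a reduced singular fiber of $\pi$ lying over a point where $C\to B$ is unramified (its existence guaranteed by the Euler-number count for a rational elliptic surface), and observe that its pullback to $X\times_B C$ is still a singular fiber, which rules out constancy. The only small caveat is that finding such a fiber away from the branch locus of $\varphi$ is not a consequence of finiteness of the branch locus alone but of the choice made in Proposition \ref{prop:proof} that the $D_i$ are unramified over the other singular fibers of $\pi$, which is exactly what the paper's proof invokes.
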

\begin{proof}
The surface $X\rightarrow B$ is a relatively minimal geometrically rational elliptic surface defined over a global field. In particular, it has at least 2 singular fibers. Hence there is at least 1 reduced singular fiber $F$. Since we chose $C$ such that $C\rightarrow B$ is not ramified at $F$, the pull-back of $F$ to $X\times_B C\rightarrow C$ is a singular fiber.
\end{proof}

We have all the tools needed to prove our main result at hand.

\begin{theorem} \label{thm:3times}
	Let $\pi: X \to \PP^1$ be a geometrically rational elliptic surface over a number field $k$ with generic rank $r$. Assume that $\pi$ admits a unique non-reduced fiber and a \emph{RNRF} conic bundle. Then the set
	$\{ t \in \PP^1(k) : \rank X_t(k) \geq r + 3\}$
	is not thin.
\end{theorem}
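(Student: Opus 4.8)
The plan is to adapt the Hilbert‑property argument of \cite{LoSal} for rank jump $2$, replacing their two base changes by three base changes along a fixed \emph{RNRF} conic bundle. Let $D$ be a bisection of $\pi$ with $|D|$ an \emph{RNRF} conic bundle (which exists by hypothesis), write $B=\PP^1$, and set $S=\{t\in B(k):\rank X_t(k)\ge r+3\}$. Since any proper closed subset of $B$ is finite, it suffices, by the discussion in Section~\ref{section:thin}, to show that for every finite family $\psi_i\colon Y_i\to B$ ($i\in I$) of generically finite dominant morphisms of degree $\ge 2$ the set $S\setminus\bigcup_i\psi_i(Y_i(k))$ is infinite; passing to smooth projective models, we may take the $Y_i$ to be smooth projective curves and the $\psi_i$ finite. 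Let $\mathcal P\subset B(k)$ be the finite set of all branch points of the $\psi_i$ together with the images of the singular fibres of $\pi$; it contains the point $t_0$ below the unique non‑reduced fibre $F$, over which every member of $|D|$ ramifies by definition of an \emph{RNRF} conic bundle. Feeding $\mathcal P$ into Proposition~\ref{prop:proof} produces $D_1,D_2,D_3\in|D|$ such that, with $C:=D_1\times_B D_2\times_B D_3$ and $\varphi:=\pi|_C\colon C\to B$: the curve $C$ is an elliptic curve with $\rank C(k)\ge 1$, so $C(k)$ is infinite; the generic rank of $X_C\to C$ is $\ge r+3$; the function field $k(C)=k(D_1)k(D_2)k(D_3)$ is linearly disjoint over $k(B)$ from every $k(Y_i)$; and $\varphi$ ramifies over exactly one point of $\mathcal P$, namely $t_0$. (That three base changes can be carried out at all, with a genus‑$1$ final base, is exactly what Lemma~\ref{lem:againrational} secures: the first base change along $|D|$ keeps the surface rational and conic‑bundled, so \cite[Thm.~2]{LoSal} applies to it to furnish the remaining two.)

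I would then propagate the rank to the special fibres and bound the bad locus in $C(k)$. The surface $X_C\to C$ is non‑constant by the lemma preceding the theorem, so Silverman's Theorem~\ref{thm:Silverman}, applied to $X_C\to C$ whose generic rank is $\ge r+3$, shows $\{s\in C(k):\rank(X_C)_s(k)<r+3\}$ is finite. For $s\in C(k)$ with $\varphi(s)\notin\mathcal P$ the fibre $X_{\varphi(s)}$ is smooth, and base‑changing $\pi$ (which is a smooth morphism over a neighbourhood of $\varphi(s)$) gives $(X_C)_s\cong X_{\varphi(s)}$ over $k$; hence $\rank X_{\varphi(s)}(k)=\rank(X_C)_s(k)\ge r+3$ for all but finitely many $s\in C(k)$. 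To dispose of the covers, fix $i$ and set $Z_i:=C\times_B Y_i$; linear disjointness of $k(C)$ and $k(Y_i)$ over $k(B)$ makes $Z_i$ an integral curve over $k$ with a finite morphism $Z_i\to C$, and I claim $Z_i(k)$ is finite. If $Z_i$ is not geometrically integral it has no $k$‑point, its geometric components being Galois‑conjugate with none defined over $k$; and if $Z_i$ is geometrically integral then, the degree‑$\ge 2$ cover $\psi_i$ being branched over at least two points of $B$, it has a branch point $t^\ast\neq t_0$, over which $\varphi$ is unramified by construction, so $Z_i\to C$ ramifies over each point of $\varphi^{-1}(t^\ast)$, and since $g(C)=1$ the Riemann--Hurwitz formula forces $g(Z_i)\ge 2$, whence Faltings' theorem gives $\#Z_i(k)<\infty$. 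Thus $\Sigma_i:=\pr_C(Z_i(k))$ is a finite subset of $C(k)$. Any $s\in C(k)$ avoiding the finite sets coming from Silverman, from $\varphi^{-1}(\mathcal P)$, and from $\bigcup_i\Sigma_i$---infinitely many such $s$ exist since $C(k)$ is infinite---then yields $P:=\varphi(s)\in S$ with $P\notin\bigcup_i\psi_i(Y_i(k))$, for $P=\psi_i(y)$ with $y\in Y_i(k)$ would put $(s,y)\in Z_i(k)$, i.e.\ $s\in\Sigma_i$. Letting $s$ vary, $S\setminus\bigcup_i\psi_i(Y_i(k))$ is infinite, as required.

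Granting Proposition~\ref{prop:proof}, the step needing genuine care is the bound $g(Z_i)\ge 2$, and its mechanism is precisely the \emph{RNRF} property. All conics of $|D|$ ramify over the single point $t_0$, so $\varphi$ is forced to ramify there; the content of Proposition~\ref{prop:proof}(d) is that $t_0$ is the \emph{only} point of $\mathcal P$ over which $\varphi$ ramifies, so over a second branch point of each $\psi_i$ the fibre product $Z_i\to C$ acquires honest ramification, which together with $g(C)=1$ forces $g(Z_i)\ge 2$. This is what upgrades ``$\{s\in C(k):\varphi(s)\in\psi_i(Y_i(k))\}$ is thin in $C$'' to ``it is finite'', and it is that finiteness---rather than any non‑thinness of $C(k)$ itself, which can fail when $\rank C(k)=1$---that lets the mere infinitude of $C(k)$ close the argument.
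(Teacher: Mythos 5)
Your proposal is correct and follows essentially the same route as the paper's proof: choose $C=D_1\times_B D_2\times_B D_3$ via Proposition~\ref{prop:proof}, show $g(C\times_B Y_i)\geq 2$ by Riemann--Hurwitz (using that the maps to $B$ share at most the one branch point $t_0$) and invoke Faltings, then apply specialization to the non-constant surface $X\times_B C\to C$ and avoid the finitely many bad points of $C(k)$. One minor quibble: when $Z_i$ is not geometrically integral it can still have $k$-points (lying on intersections of the conjugate components), but these are finite in number, so your finiteness conclusion, and hence the argument, stands unchanged.
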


\begin{proof}
We choose $C$ as in Prop. \ref{prop:proof}.
By construction, the curves $D_1, D_2, D_3$, and $Y_i$ are smooth and the respective maps to $B$ share at most one branch point. On the other hand, the map $C\times_B Y_i \rightarrow C$ is branched on the ramification points of $Y_i\rightarrow B$. A direct application of the Riemann--Hurwitz formula gives $g(C\times_B Y_i)\geq 2$. In particular, by Faltings' theorem, $C\times_B Y_i(k)$ is finite. 
To conclude, we invoke part c) of Prop. \ref{prop:proof} and apply Theorem \ref{thm:pencil} to the non-constant elliptic surface $X\times_B C\rightarrow C$.  
\end{proof}
\section{Examples}\label{Sec:examples}

\begin{example}
Let $X$ be an elliptic surface with Weierstrass equation \[y^2=x^3+a(t)x+b(t),\] with $\deg a(t), b(t)\leq 1$ and $a(t)$ and $b(t)$ not simultaneously constant. Then $X$ admits a non-reduced fiber at infinity. More precisely:
\begin{itemize} 
\item[i)] If $\deg a(t)=1$ then $X$ admits a fiber of type $III^*$;
\item[ii)] If $\deg a(t)=0$ then $X$ admits a fiber of type $II^*$. 
\end{itemize} 
The surface $X$ admits a \emph{RNRF} conic bundle over the $x$-line. In case $ii)$ this is the unique conic bundle on the surface. A nice geometric description for this case is as follows. Let $C$ be a plane cubic with an inflection point $P$ defined over $k$. Let $L$ be the line tangent to $C$ at $P$. We consider the following pencil of plane cubics 
\[
uC + t(3L)=0, \, \, (t:u) \in \mathbb{P}^1
\]
It has a unique (non-reduced) base point given by $P$. We consider its 9-fold blow up and obtain a rational elliptic surface with a fiber of type $II^*$ at infinity. The unique conic bundle on it is given by the strict transforms of the pencil of lines through $P$. By following the blow ups one sees readily that all conics intersect the unique double component of the fiber of type $II^*$ (Figure~\ref{figure::II*}).

\begin{figure}[!h]\label{figure::II*}
\begin{center}
\includegraphics[scale=0.63]{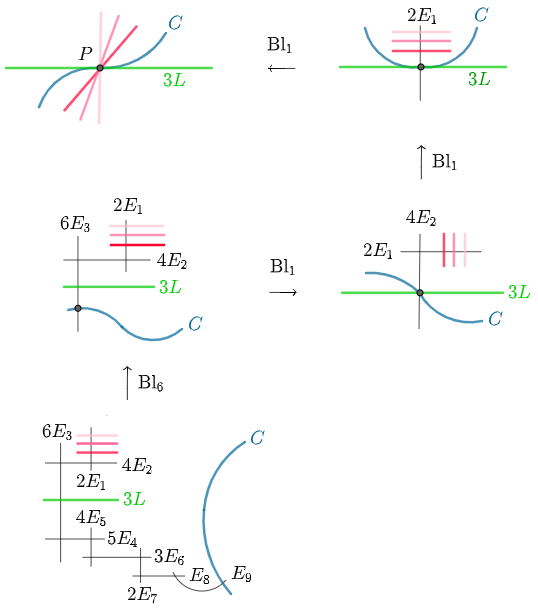}
\end{center}
\caption{RNRF conic bundle (pink) on $X$ with a $II^*$ fiber.}
\end{figure}

\end{example}

\begin{example}\label{example2}
Let $F=y^2z-x^3+x^2z+xz^2-z^3$ and $G=z^2(y-z)$ be two plane cubics.  Let $P_1=(0:1:0)$ and $P_i=(x_i:1:1)$ with $x_i$, for $i=2,3,4$, the 3 roots of the polynomial $x^3-x^2-x$. Then the intersection cycle  of $F$ and $G$ satisfies $F\cdot G=\{6P_1,P_2,P_3,P_4\}$. Let $X$ be the blow up of $\mathbb{P}^2$ in $F\cdot G$. Then its (affine) Weierstrass equation is 
$$y^2-ty=x^3-x^2-x+(t-1).$$ 

In particular, $X$ has a fiber of type $IV^*$ at $t=\infty$ and, as expected by Proposition \ref{conicbundles}, it admits a conic bundle over the $x$-line.  
Geometrically, the fiber of type $IV^*$ is given by $G'-E_1-E_2-E_3-E_4$, where $G'$ is the proper transform of $G$ and $E_i$ is the exceptional divisor above $P_i$. If $D$ is a fiber of the conic bundle over the $x$-line then $D=l_1-E_1$, where $l_1$ is the proper transform of a line through $P_1$. In particular, for all lines through $P_1$, except the 3 lines that pass through $P_2, P_3$ or $P_4$, the curve $D$ intersects the fiber $IV^*$ transversally in the simple component given by $m-E_2-E_3-E_4$ where $m$ is the proper transform of the line $y=z$, and in a simple component above the blow up of $P_1$. Hence the restriction of the elliptic fibration to all but 3 conics is not ramified at $IV^*$ and, in particular, cannot all share a common ramification. In other words, the conic bundle over the $x$-line is not a RNRF conic bundle.

Nevertheless, $X$ admits a RNRF conic bundle, namely the one given by $|l_2-E_2|$, where $l_2$ is the proper transform of a line through $P_2$. Indeed, $l_2$ intersects the double component of $IV^*$ above the strict transform of $z^2=0$. Since $P_2=(0:1:1)$, the conic bundle is defined over $\mathbb{Q}$.
Thus there are infinitely many $t\in \mathbb{Q}$ such that $r_t\geq 1+3=4$ (Figure~\ref{figure::IV*}).

\begin{figure}[h]\label{figure::IV*}
\begin{center}
\includegraphics[scale=0.68]{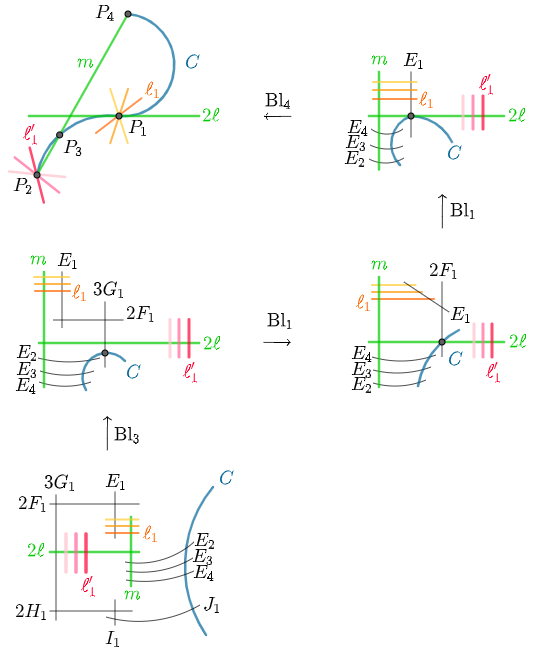}
\end{center}
\caption{RNRF conic bundle (pink) and a non-RNRF conic bundle (orange) on $X$ with a $IV^*$ fiber.}
\end{figure}

\end{example}

\begin{example}
Let $X$ be the rational elliptic surface studied by Wa-shington in \cite{Washington} with Weierstrass equation 
\[
y^2=x^3+tx^2-(t+3)x+1.
\]
The generic Mordell-Weil rank of this surface over $\mathbb{Q}$ is 1. Its singular fibers are of type $(I_2^*,2II)$. In particular, by Lemma \ref{RNRF}, it admits a \emph{RNRF}-conic bundle defined over $\mathbb{Q}$. We can apply Theorem \ref{thm:3times} to conclude that the subset of fibers of rank at least 4 is not thin.

For this surface, we can expect an even higher rank jump on a non-thin set.  Indeed, Rizzo proved in \cite[Thm. 1]{Rizzo} that the root number of each fiber is $-1$. Hence, under the Parity conjecture, the rank of all fibers is odd. This together with Theorem \ref{thm:3times} would imply that the set of fibers with rank at least 5 is not thin. In other words, under the Parity conjecture, there is a rank jump of at least 4 for a non-thin set of fibers.

\end{example}

\end{document}